\documentclass[lang = american]{ems-icm-test} 
\newtheorem{theorem}{Theorem}
\newtheorem{proposition}[theorem]{Proposition}
\newtheorem{lemma}[theorem]{Lemma}
\newtheorem{corollary}[theorem]{Corollary}

\theoremstyle{definition}

\newtheorem{remark}[theorem]{Remark}

\newtheorem{example}[theorem]{Example}

\numberwithin{theorem}{section}

\makeatletter
\def\Ddots{\mathinner{\mkern1mu\raise\p@
\vbox{\kern7\p@\hbox{.}}\mkern2mu
\raise4\p@\hbox{.}\mkern2mu\raise7\p@\hbox{.}\mkern1mu}}
\makeatother

\newcommand{\PP}{\mathbb{P}}
\newcommand{\RR}{\mathbb{R}}

\newcommand{\CC}{\mathbb{C} }
\newcommand{\ZZ}{\mathbb{Z}}


\numberwithin{equation}{section}

\begin{document}

\volumetitle{}

\title{Beyond Linear Algebra}
\titlemark{Beyond Linear Algebra}

\emsauthor{1}{Bernd Sturmfels}{B.~Sturmfels}

\emsaffil{1}{Max-Planck Institute for Mathematics in the Sciences, \\
Inselstrasse 22, 04103 Leipzig, Germany
\email{bernd@mis.mpg.de} \qquad and \\
University of California, Berkeley CA 94720, USA \email{bernd@berkeley.edu}}


\begin{abstract}
Our title challenges the reader to venture beyond linear
algebra in designing models and in thinking about numerical algorithms for
identifying solutions.
This article accompanies the author's lecture at the
International Congress of Mathematicians 2022. 
It covers recent advances
in the study of critical point equations in optimization and statistics,
and it explores the role of nonlinear algebra in the study of linear~PDE with constant coefficients.
 \end{abstract}

\maketitle


\section{Introduction}

Linear algebra is ubiquitous in the mathematical universe.
It plays a foundational role for many models in the
sciences and engineering, and its numerical methods
are a driving force behind today's technologies.
 The power of linear algebra
stems from our ability, honed through the practice of calculus, to approximate
nonlinear shapes by linear spaces.

Yet, the world is nonlinear. Nonlinear equations are a natural
ingredient in mathematical models for the real world.
In our view, the true nonlinear nature of a phenomenon
should be respected as long as possible.
We argue against the common practice of passing to
a linear approximation immediately.
Of course, in the final step of implementing scalable 
algorithms, one will always employ the powerful tools
of numerical linear algebra. However, in the early phase of
exploring and designing a model, there is significant benefit in
going beyond linear algebra. Mathematical fields such as
algebraic geometry, algebraic topology, combinatorics,
commutative algebra or  representation theory furnish  practical tools.

The growing awareness of theoretical mathematics
in applications has led to a new field called
{\em Nonlinear Algebra}. The textbook \cite{MS} offers
foundations for interested students.
The aim of this lecture is to introduce research trends
and discuss a few recent results.
At the core of many problems 
lies the study of subsets of $\RR^n$ that are defined by polynomials:
\begin{equation}
\label{eq:semialgebraic}
\bigl\{\, x \in \RR^n:
f_1(x) = \cdots = f_k(x) = 0,
g_1(x) \geq 0, \ldots, g_l(x) \geq 0,
h_1(x) > 0 , \ldots, h_m(x) > 0 \bigr\}. 
\end{equation}
The set (\ref{eq:semialgebraic}) is a \emph{basic semialgebraic set}.
The Positivstellensatz \cite[Theorem 6.14]{MS}
gives a criterion for deciding whether this set is empty.
This seemingly theoretical criterion
has become a practical numerical method,
thanks to sums of squares \cite[\S 12.3]{MS}  and
semidefinite programming  \cite{BPT}.  
In addition to this, there are symbolic algorithms for
real algebraic geometry (cf.~\cite{ARS}).
So, the user has a wide range of choices
for working with semialgebraic~sets.

In this article we   disregard the inequalities in (\ref{eq:semialgebraic}) and
retain the equations only:
\begin{equation}
\label{eq:algebraic}
X \,\, = \,\, \bigl\{\, x \in \RR^n: f_1(x) = \cdots = f_k(x) = 0  \bigr\}. 
\end{equation}
This is a \emph{real algebraic variety}. We wish to answer questions about $X$
by reliable numerical computations, in particular using
tools such as \texttt{Bertini} \cite{Bertini}
or \texttt{HomotopyContinuation.jl} \cite{HCjl}.
We focus on questions that are addressed by solving
auxiliary polynomial systems with finitely many solutions,
where the number of complex solutions can be determined a priori.

In Section \ref{sec2} that number is the
Euclidean distance degree (ED degree) of $X$.
This governs the following question:
given $u \in \RR^n \backslash X$, which point in $X$
is nearest to $u$ in Euclidean distance?
We derive the critical equations of this optimization problem (\ref{eq:EDproblem}),
and we consider all solutions to these equations, both real and complex.
These include all local minima and local maxima.
Theorem \ref{thm:polarED} expresses
the ED degree in terms of the polar degrees of $X$.
Knowing these invariants allows us to 
find all critical points numerically, along with
a proof of correctness \cite{BRT}.
We ask our nearest point question also for other norms,
notably those given by a polytope.
The polar degrees appear again,
in Proposition~\ref{prop:polari}.

Section \ref{sec3} concerns algebraic varieties $X$
that serve as models in statistics. Their points
represent probability distributions. We focus on models for
Gaussian distributions and discrete
distributions. In these two scenarios, the ambient space
$\RR^n$ in (\ref{eq:algebraic}) is replaced
by the positive-definite cone $\text{PD}_n$
and by the probability simplex $\Delta_n$.
Given any data set, we wish to ascertain whether $X$ is
an appropriate model. To this end, 
maximum likelihood estimation (MLE) is used.
This optimization problem is stated explicitly in (\ref{eq:loglikegaussian2}) 
and (\ref{eq:MLE}), and we employ nonlinear algebra \cite{MS} in addressing it.
The number of complex critical points
is the maximum likelihood degree (ML degree) of the model~$X$.
Theorem \ref{thm:Euler} relates this to the Euler characteristic of
the underlying very affine variety. We apply this theory to a class of models
arising in particle physics, namely the configuration space of $m$ labeled points in 
general position in $\PP^{k-1}$. Known ML degrees for these models are given in
Theorem \ref{thm:CEGM}.

In Section \ref{sec4},
we turn to an analytic interpretation
of the polynomial system in (\ref{eq:algebraic}).
The unknowns $x_1,\ldots,x_n$ are replaced
by differential operators $\frac{\partial}{\partial z_1},\ldots,\frac{\partial}{\partial z_n}$,
and the polynomials $f_1,\ldots,f_k$ are viewed
as linear partial differential equations (PDE) with constant coefficients.
The variety $X$ is replaced by the space of
functions $\phi(z_1,\ldots,z_n)$ that are solutions to the PDE.
That space is typically infinite-dimensional. Our task is to compute it.
Algorithms are based on differential primary decompositions
\cite{AHS, CS, CHS}.
We also study linear PDE for vector-valued functions.
These are expressed by modules over a polynomial ring.

This article accompanies a lecture to be given in July 2022 at the
International Congress of Mathematicians in St.~Petersburg.
It encourages mathematical scientists to employ polynomials
 in designing models and in thinking about numerical algorithms.
Sections \ref{sec2} and \ref{sec3} are concerned with
critical point equations in optimization and statistics.
Section \ref{sec4} offers a glimpse on how
 nonlinear algebra interfaces with the study of linear~PDE.

\section{Nearest Points on Algebraic Varieties}
\label{sec2}

We consider a model $X$  that is given as the zero set
in $\RR^n$ of a collection $\{f_1,\ldots,f_k\}$ of nonlinear polynomials 
in $n$ unknowns $x_1,\ldots,x_n$.  Thus, $X$ is a real algebraic variety.
We assume that $X$ is irreducible, that
$I_X = \langle f_1,\ldots,f_k\rangle$ is its prime ideal, and that
the set of nonsingular real points is  Zariski dense in $X$.
The $k \times n$ Jacobian matrix $\mathcal{J} = (\partial f_i / \partial x_j)$
has rank at most $c$ at any point $x \in X$, where $c =  \text{codim}(X)$, and
 $x$ is {\em nonsingular} on $X$ if the rank is exactly $c$.
 Explanations of these hypotheses are found in Chapter 2 of the textbook~\cite{MS}.

The following optimization problem arises in many applications.
Given a data point $u \in \RR^n \backslash X$, 
compute the distance to the model $X$. Thus, we
seek a point $x^*$ in $X$ that is closest to $u$.
The answer depends on the chosen metric. One might choose the
Euclidean distance,  a $p$-norm \cite{KKS}, or polyhedral norms, such as those
arising in optimal transport \cite{wasserstein}.
In all of these cases, the solution $x^*$ can be found by solving
a system of polynomial equations.

We begin by discussing the {\em Euclidean distance (ED) problem}, which is as follows:
\begin{equation}
\label{eq:EDproblem}
 \text{minimize} \,\, \sum_{i=1}^n (x_i - u_i)^2 \,\, \text{subject to} \,\,\, x \in X . 
 \end{equation}
 We now derive the critical equations for (\ref{eq:EDproblem}).
  The {\em augmented Jacobian matrix} $\mathcal{AJ} $ is the $(k+1) \times n$ matrix
 obtained by placing the row $(x_1-u_1,\ldots,x_n-u_n)$ atop
 the Jacobian matrix $\mathcal{J}$. We form the ideal
 generated by its $(c+1) \times (c+1)$ minors, we add the ideal of the model $I_X$, and
  we then saturate \cite[(2.1)]{DHOST} that sum by the ideal of $c \times c$ minors of $\mathcal{J}$.
 The result is the {\em critical ideal} $\mathcal{C}_{X,u}$ of the
 model $X$ with respect to the data $u$. The variety of
 $\mathcal{C}_{X,u}$ is the set of critical points of (\ref{eq:EDproblem}).
For random data $u$, this variety is finite and it contains
the optimal solution $x^*$, provided the latter is attained at a nonsingular point of $X$.

The algebro-geometric approach to the ED problem was pioneered in a project with
 Draisma, Horobe\c t, Ottaviani and Thomas \cite{DHOST}.
That article introduced
the {\em ED degree} of $X$. This is the cardinality of the complex 
algebraic variety in $\CC^n$ defined by the critical ideal $\mathcal{C}_{X,u}$.
The ED degree of a model $X$ measures the difficulty
of solving the ED problem for~$X$.

\begin{example}[Space curves] \label{ex:spacecurves}
Fix $n=3$ and let $X$ be the curve in $\RR^3$ defined by two general
polynomials $f_1$ and $f_2$ of degrees $d_1$ and $d_2$ in $x_1,x_2,x_3$.
The augmented Jacobian matrix is
\begin{equation}
\label{eq:AJ1}
 \mathcal{AJ} \,\, = \,\,\,\, \begin{pmatrix} 
x_1-u_1 && x_2 - u_2 && x_3 - u_3 \\
\partial f_1 / \partial x_1 &\,\,\,& \partial f_1 / \partial x_2 &\,\,\,&\partial f_1 / \partial x_3 \\
\partial f_2 / \partial x_1 &\,\,\,& \partial f_2 / \partial x_2 &\,\,\,&\partial f_2 / \partial x_3 \\
\end{pmatrix}.
\end{equation}
For random data $u \in \RR^3$,
the ideal $\,\mathcal{C}_{X,u} =  \bigl\langle f_1,\, f_2 ,\, \text{det}(\mathcal{AJ}) \bigr\rangle\,$
has $d_1 d_2 (d_1+d_2-1)$ zeros in $\CC^3$,
by B\'ezout \cite[Theorem 2.16]{MS}. Hence the ED degree of 
$X$ equals $d_1 d_2 (d_1+d_2-1)$.
This can also be seen using the general formula from algebraic geometry in
\cite[Corollary 5.9]{DHOST}. If $X$ is a general smooth curve 
of degree $d$ and genus $g$, then
$\text{EDdegree}(X) = 3d + 2g-2$.
The above curve in $3$-space has degree
$d = d_1 d_2$ and genus $g = d_1^2d_2/2 + d_1 d_2^2/2 - 2 d_1 d_2 + 1    $.
\end{example}

Here is a general  upper bound on the ED degree
in terms of the given polynomials.

\begin{proposition} \label{prop:implgeneric}
Let $X $ be a variety of codimension $c$ in $\RR^n$ whose ideal $I_X$ is generated by
polynomials $f_1,f_2, \ldots,f_c, \ldots , f_k\,$ of degrees
$\,d_1\ge d_2\ge\cdots \ge d_c\ge\cdots \ge d_k$.
Then
\begin{equation}
\label{eq:EDdegCI}
 \text{EDdegree}(X) \,\,\,\le \,\,\,\,
d_1 d_2 \cdots d_c \cdot \!\!\!\!\!\!\!\!\!\!\!\!
\sum_{i_1+i_2+\cdots+i_c \leq n-c} \!\!\! \!\! (d_1-1)^{i_1} (d_2-1)^{i_2} \cdots (d_c-1)^{i_c}.
\end{equation}
Equality holds when $X$ is a generic complete intersection of codimension $c$
(hence $c=k$).
\end{proposition}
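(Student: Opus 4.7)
The strategy is to prove equality for generic complete intersections, and to reduce the general inequality to that case.

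\textit{Equality for generic complete intersections.} Assume $X = V(f_1,\ldots,f_c)$ is smooth, cut out by generic forms of degrees $d_1,\ldots,d_c$. Smoothness forces the Jacobian $\mathcal{J}$ to have rank $c$ everywhere on $X$, so the saturation in the critical ideal is trivial: $\mathcal{C}_{X,u} = \langle f_1,\ldots,f_c\rangle + I_{c+1}(\mathcal{AJ})$. For generic $u$, this ideal is zero-dimensional in $\CC^n$, and its length equals $\text{EDdegree}(X)$. I would compute this length via Theorem~\ref{thm:polarED}, writing $\text{EDdegree}(X)$ as a sum of polar degrees of the projective closure $\bar X \subset \PP^n$. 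Since the normal bundle of $\bar X$ splits as $\bigoplus_{i=1}^c \mathcal{O}_{\bar X}(d_i)$, the total Chern class is $c(T\bar X) = (1+h)^{n+1}/\prod_{i=1}^c(1+d_i h)$, and the splitting principle expands the sum of polar degrees into $d_1\cdots d_c \cdot \sum_{i_1+\cdots+i_c\le n-c}\prod_j(d_j-1)^{i_j}$, matching the right-hand side of~\eqref{eq:EDdegCI}.

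\textit{Reduction to the complete intersection case.} Given $I_X = \langle f_1,\ldots,f_k\rangle$ with $k > c$, I would produce a codimension-$c$ complete intersection $Y \supseteq X$ defined by polynomials of degrees $d_1,\ldots,d_c$. For each $i$, the graded piece $(I_X)_{d_i}$ contains $f_i$ as well as elements $h_{ij} f_j$ for $d_j \le d_i$ and $\deg h_{ij} = d_i - d_j$. Choosing a generic such combination $g_i$ for each $i$, Bertini yields a regular sequence $g_1,\ldots,g_c$, so $Y = V(g_1,\ldots,g_c)$ is a codimension-$c$ complete intersection containing $X$. At a nonsingular point $x \in X$, the gradients $\nabla g_1(x),\ldots,\nabla g_c(x)$ lie in $(T_xX)^{\perp}$ and, for generic $h_{ij}$, span this $c$-dimensional space; hence $T_xY = T_xX$. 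Consequently every critical point of~\eqref{eq:EDproblem} on $X$ at a nonsingular point is also a critical point of the ED problem on $Y$, giving $\text{EDdegree}(X) \le \text{EDdegree}(Y)$. Upper semicontinuity of fiber length in the flat family of complete intersections of the prescribed degrees then yields $\text{EDdegree}(Y) \le$ RHS, since the generic member of this family achieves equality by the first part.

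\textit{Expected difficulty.} The main obstacle is the Chern-class manipulation in the equality step: one must expand $(1+h)^{n+1}\prod_{i=1}^c(1+d_i h)^{-1}$, extract the individual polar degrees, and sum them to recognize the prescribed expression. This amounts to a combinatorial identity matching complete homogeneous symmetric polynomials $h_k(d_1-1,\ldots,d_c-1)$ to partial sums of polar degrees, and it is where the precise shape of the right-hand side of~\eqref{eq:EDdegCI} emerges. A secondary subtlety in the reduction is to verify that the Bertini-type choice of the $g_i$ renders $Y$ nonsingular along a Zariski-open subset of $X$, so that the tangent-space equality $T_xY = T_xX$ holds at enough points of $X$ to transfer the counting of critical points from $X$ to $Y$.
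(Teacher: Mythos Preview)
Your approach is viable but substantially more elaborate than the paper's. The paper dispatches both the bound and the equality by a direct B\'ezout count: the degree of $X$ is at most $d_1 \cdots d_c$; the entries of the rows of $\mathcal{AJ}$ have degrees $1, d_1{-}1, \ldots, d_k{-}1$, so the locus where $\text{rank}(\mathcal{AJ}) \le c$ has degree at most the sum in \eqref{eq:EDdegCI}; and the critical set, lying in their intersection, has cardinality at most the product. For a generic complete intersection this intersection is transverse and B\'ezout gives equality. There is no polar-degree computation, no Chern-class expansion, no auxiliary complete intersection $Y$, and no semicontinuity.

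Your route through Theorem~\ref{thm:polarED} and the Chern class of the normal bundle is correct for the equality case, but it front-loads precisely the combinatorial identity you flag as the main obstacle, whereas the paper's argument makes that identity a one-line Thom--Porteous/B\'ezout observation. In the reduction step, be careful: ``upper semicontinuity of fiber length'' runs the wrong direction---specializing in a flat family makes fiber length go \emph{up}, not down, so as stated it would give $\text{EDdegree}(Y) \ge \text{RHS}$. The inequality $\text{EDdegree}(Y) \le \text{RHS}$ you want is true, but the clean justification is again the direct B\'ezout bound applied to $Y$ itself, at which point the detour through $Y$ becomes redundant.
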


This appears in \cite[Proposition 2.6]{DHOST}. We can derive it as follows.
B\'ezout's Theorem ensures that the degree of the variety $X$ is at most $d_1 d_2 \cdots d_c$.
The entries in the $i$th row of the matrix $\mathcal{AJ}$ are polynomials of degrees $d_i-1$.
The degree of the variety of $(c+1) \times (c+1)$ minors of  $\mathcal{AJ}$ is at most
the sum in (\ref{eq:EDdegCI}). The intersection of that variety with $X$ is our set of
critical points, and the cardinality of that set is bounded by the product of the two degrees.
Generically, that intersection is a complete intersection  and the inequality (\ref{eq:EDdegCI}) is attained.

Formulas or a priori bounds for the ED degree are important when studying
 exact solutions to the optimization problem (\ref{eq:EDproblem}).
The paradigm is to compute all complex critical points,
 by either symbolic or numerical methods, and to then
extract one's favorite real solutions among these.
 This leads, for instance, to all local minima 
in (\ref{eq:EDproblem}). The ED degree is an upper bound on the number of
real critical points, but this bound is generally not tight. 

\begin{example} Consider the case $n=2,c=1,d_1 = 4$ in Proposition
\ref{prop:implgeneric}, where $X$ is a quartic curve in the
plane $\RR^2$. The number of complex critical points is $\textrm{EDdegree}(X) = 16$.
But, they cannot be all real.
For an illustration, consider the  {\em Trott curve}
$X = V(f)$, defined by
$$ f \,\,=\,\,  144(x_1^4+x_2^4) \,- \, 225(x_1^2+x_2^2) \, + \, 350x_1^2 x_2^2 \, + \, 81 . $$

\begin{figure}[h]
                \vspace{-0.06cm}
                \begin{center} 
                        \includegraphics[scale=0.31]{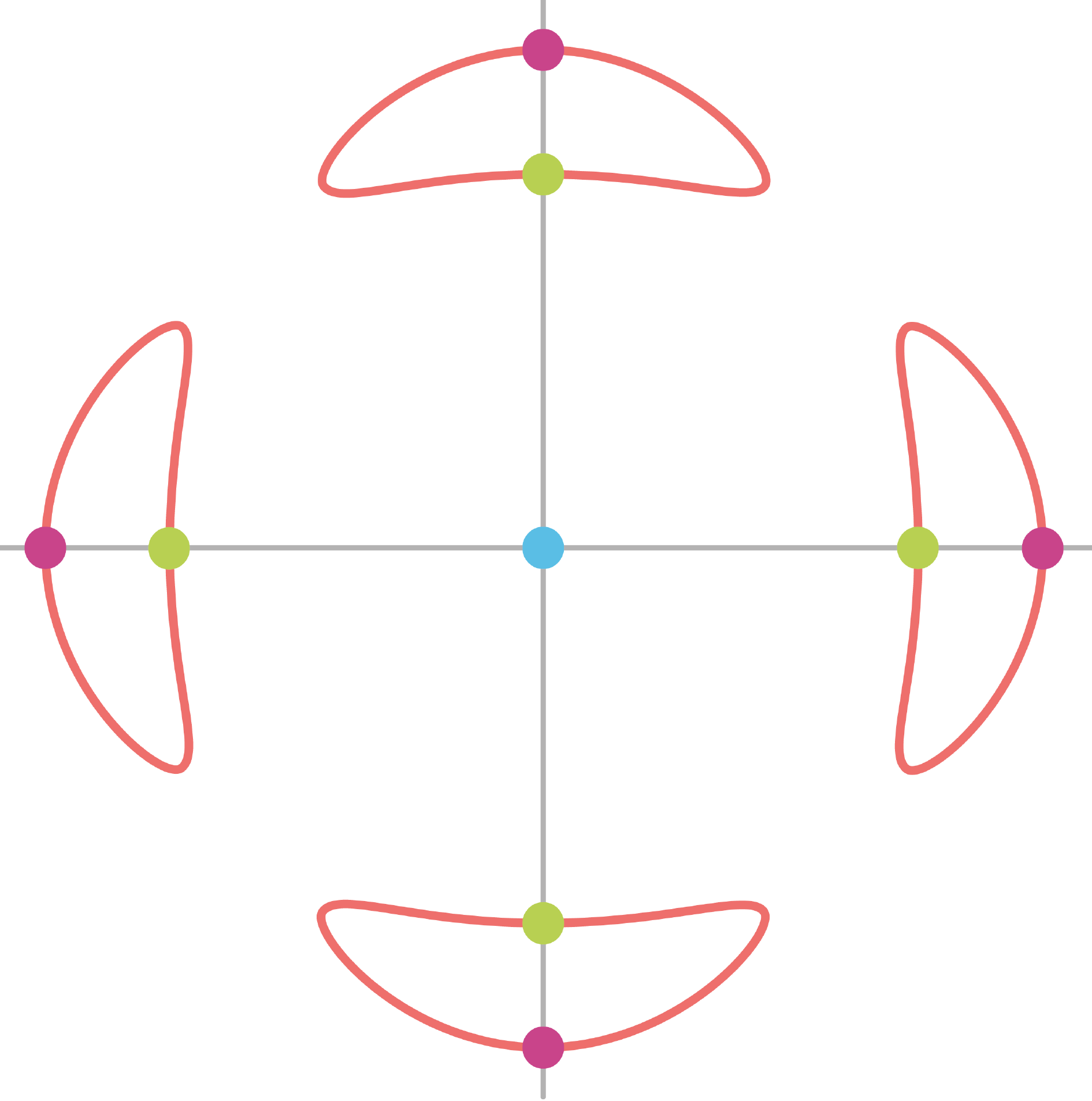}
                        \quad 
			\includegraphics[scale=0.32]{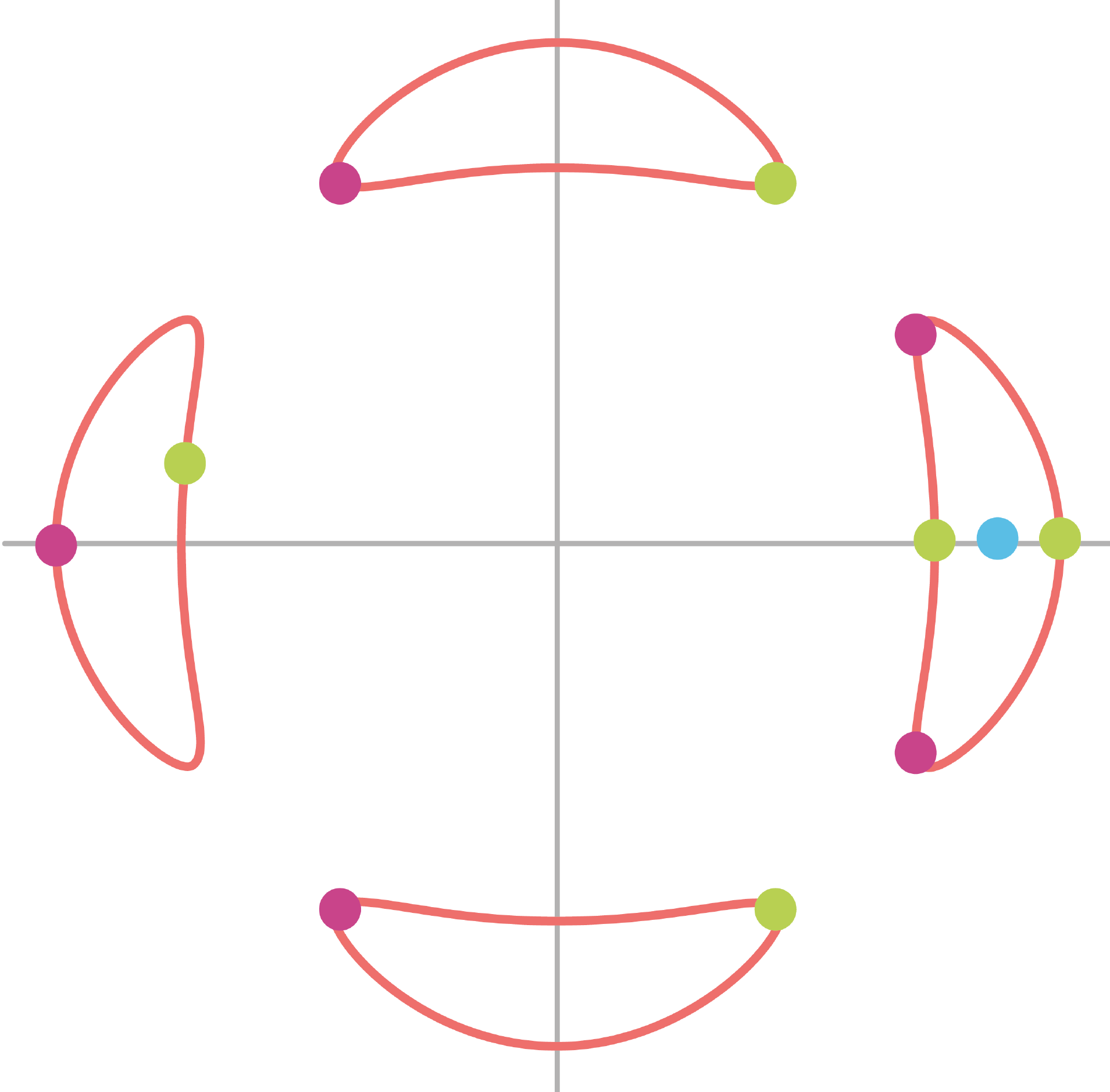}
                \end{center}
                \vspace{-0.5cm}
                \caption{
ED problems on  the Trott~curve:
  configurations of eight (left) or ten (right) critical points.              
                \label{fig:trottcurves}}
        \end{figure}
For general data $u = (u_1,u_2)$ in $\RR^2$,
we find $16$ complex solutions to the critical equations
$f = \frac{\partial f}{\partial x_2} (x_1 - u_1) - \frac{\partial f}{\partial x_1} (x_2-u_2) = 0$.
 For $u$ near the origin,
eight of them are real. 
For $u = \bigl(\frac{7}{8},\frac{1}{100}\bigr)$, which is inside the rightmost oval,
there are $10$ real critical points.
The two scenarios are shown in Figure \ref{fig:trottcurves}.
Local minima are green, while local maxima are purple.
For $u = (2,\frac{1}{100})$, to the right of the rightmost oval, the number
of real critical points is $12$.
\end{example}

In general, our task is to compute the zeros of the critical ideal $C_{X,u}$.
Algorithms for this computation
can be either symbolic or numerical. Symbolic methods
usually rest on the construction of a Gr\"obner basis, to be
followed by a floating point computation to extract the 
solutions. In recent years,
numerical methods have become increasingly popular. 
These are based on homotopy continuation. 
Two notable packages are
\texttt{Bertini} \cite{Bertini} and \texttt{HomotopyContinuation.jl} \cite{HCjl}.
The ED degree is important here because it indicates
how many paths need to be tracked to solve (\ref{eq:EDproblem}).
We next illustrate current capabilities.

\begin{example}
\label{ex:simon}
Suppose $X$ is defined by $c=k=3$ random
polynomials in $n=7$ variables, for a range of degrees $d_1,d_2,d_3$.
The table below lists  the ED degree in each case, and
the times used by \texttt{HomotopyContinuation.jl}
to compute and certify all critical points in~$\CC^7$.
$$
\begin{matrix} d_1 \,d_2 \, d_3 &
\quad 3 \, 2\, 2  \quad & \quad
3 \, 3 \, 2   \quad & \quad
3 \, 3 \, 3  \quad &   \quad
4 \, 2 \, 2  \quad &  \quad 
4\, 3\, 2  \quad &  \quad
4\, 3\, 3  \quad &  \quad
4\, 4\, 2  \quad &  \quad
4\, 4\, 3 \quad 
\\\ \textrm{EDdegree} &
\quad 1188 &
\quad 3618 &
\quad 9477 &
\quad 4176 &
\quad 10152 &
\quad 23220 &
\quad 23392 &
\quad 49872 \\
\textrm{Solving (sec)} &
\quad 3.849 &
\quad 21.06 &
\quad 61.51 &
\quad 31.51 &
\quad 103.5 &
\quad 280.0 &
\quad 351.5 &
\quad 859.3 \\
\!\textrm{Certifying (sec)} \!\!\!\! &
\quad 0.390 &
\quad 1.549 &
\quad 4.653 &
\quad 2.762 &
\quad 7.591 &
\quad 17.16 &
\quad 21.65 &
\quad 50.07 \\
\end{matrix}
$$
Here we represent $C_{X,u}$
by a system of $10$ equations in $10$ variables.
In addition to the three equations $f_1=f_2=f_3 = 0$
in $x_1,\ldots,x_7$, we take the seven equations
$(1,y_1,y_2,y_3) \cdot \mathcal{AJ} = 0$.
Here $y_1,y_2,y_3$ are new variables.
These ensure that the
$4 \times 7$ matrix $\mathcal{AJ}$ has rank $\leq 3$.
In all cases the timings include the certification step~\cite{BRT}
that proves correctness and completeness. 
These computations were performed using \texttt{HomotopyContinuation.jl} v2.5.6 on a 16 GB MacBook Pro with an Intel Core i7 processor working at 2.6 GHz.
They suggest that our critical equations can be solved fast and reliably, with proof
of correctness, when the ED degree is less than $50000$. 
For even larger numbers of solutions, success with numerical path tracking
will depend on the specific structure of the problem. If the discriminant
is well-behaved, then larger ED degrees are feasible.
An example of this appears in \cite[Table 1]{ST}.
\end{example}

We next present a general formula for ED degrees in terms
of projective geometry. 

\begin{theorem} \label{thm:polarED}
If $\,X$ meets both the hyperplane at infinity and the isotropic quadric transversally, then
$\text{EDdegree}(X)$ equals
 the sum of the polar degrees of the projective closure of~$X$.
\end{theorem}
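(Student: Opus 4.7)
The plan is to translate $\text{EDdegree}(X)$ into a projective intersection number on $\mathbb{P}^n \times (\mathbb{P}^n)^*$ and then expand that number in terms of the polar degrees of the projective closure $Y := \overline{X}$. First I would set up the conormal variety $N_Y \subset \mathbb{P}^n \times (\mathbb{P}^n)^*$, the closure of the pairs $(p,H)$ with $p \in Y_{\text{sm}}$ and $H$ a tangent hyperplane to $Y$ at $p$. Since $\dim N_Y = n-1$, its class in the Chow ring decomposes uniquely as
$$[N_Y] \;=\; \sum_{i=0}^{n-1}\delta_i\,h_1^{\,i+1}h_2^{\,n-i},$$
where $h_1,h_2$ are the pullbacks of the hyperplane classes and the coefficients $\delta_i = \delta_i(Y)$ are by definition the polar degrees appearing in the theorem.

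Next I would encode the ED critical equations as incidence with a single linear subvariety. A smooth affine point $x \in X$ is a critical point of the squared distance to $u$ if and only if $x - u \in N_x X$; converting vectors to hyperplanes via the Euclidean pairing $\langle v,w\rangle = \sum v_iw_i$, this says precisely that the hyperplane with normal $x-u$ is a tangent hyperplane to $Y$ at $x$. Let $\Gamma_u \subset \mathbb{P}^n \times (\mathbb{P}^n)^*$ be the projective closure of the graph of the affine-linear isomorphism $x \mapsto (\text{hyperplane with normal } x - u)$. Then $\Gamma_u$ is an $n$-dimensional linear subvariety whose class is that of the graph of a projective linear isomorphism,
$$[\Gamma_u]\;=\;\sum_{k=0}^{n}h_1^{\,n-k}h_2^{\,k}.$$
Multiplying in the Chow ring and extracting coefficients of the point class $h_1^n h_2^n$ (only the terms with $k = i+1$ survive), I obtain $[N_Y]\cdot[\Gamma_u] = \sum_{i=0}^{n-1}\delta_i$.

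What remains, and what I expect to be the main obstacle, is to argue that this projective intersection number actually equals the affine count $\text{EDdegree}(X)$. The intersection $N_Y \cap \Gamma_u$ could in principle acquire excess contributions on two loci: (a) the hyperplane at infinity $H_\infty = V(x_0)$, where the affine map underlying $\Gamma_u$ ceases to be defined, and (b) the isotropic quadric $Q = V(x_1^2+\cdots+x_n^2)$, where the Euclidean pairing used to define $\Gamma_u$ degenerates. I would carry out a local analysis of $N_Y$ over each point of $Y \cap H_\infty$ and $Y \cap Q$ and show that the assumed transversality of $Y$ with $H_\infty$ and with $Q$ forces the conormal fibers over those points to be disjoint from $\Gamma_u$ for generic $u$, so no contributions come from infinity or from isotropic points. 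Granting this, the full intersection number $\sum_i \delta_i$ is realized by affine critical points counted with multiplicity one, and the identity $\text{EDdegree}(X) = \sum_{i=0}^{n-1}\delta_i(Y)$ follows. The technical heart of the argument is exactly this excess-intersection analysis; once one verifies that breaking transversality at $H_\infty$ or $Q$ is the only way contributions could escape to the bad loci, the rest of the proof reduces to the Chow-ring computation above.
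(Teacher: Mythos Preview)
The paper itself does not give a proof; it simply records that the result is \cite[Proposition~6.10]{DHOST} and that the transversality hypothesis is made precise in \cite[equation~(6.4)]{DHOST}. Your outline is in the spirit of that reference, but the central Chow-ring step contains a genuine error, not just an imprecision at the boundary.

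The conormal variety $N_Y$ has dimension $n-1$ in $\PP^n\times(\PP^n)^\vee$, so its class sits in codimension $n+1$. Your $\Gamma_u$, being the closure of a graph over $\PP^n$, has dimension $n$, so its class sits in codimension $n$. Their product then has codimension $2n+1$, which exceeds $\dim\bigl(\PP^n\times(\PP^n)^\vee\bigr)=2n$, and hence vanishes. Concretely, with $[N_Y]=\sum_i\delta_i\,h_1^{i+1}h_2^{n-i}$ and $[\Gamma_u]=\sum_k h_1^{n-k}h_2^{k}$, the $(i,k)$ term is $\delta_i\,h_1^{\,n+i+1-k}h_2^{\,n-i+k}$; demanding exponent $n$ on $h_1$ forces $k=i+1$, while demanding exponent $n$ on $h_2$ forces $k=i$. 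No term survives, so $[N_Y]\cdot[\Gamma_u]=0$, not $\sum_i\delta_i$. The geometry matches: at a smooth affine point $[1:x]$ the tangent hyperplanes to $Y$ are $[\,{-}a\!\cdot\! x:a\,]$ with $a\in N_xX$, so the hyperplane actually witnessing criticality is $[\,{-}(x{-}u)\!\cdot\! x:x{-}u\,]$, whose first coordinate is quadratic in $x$. The genuinely affine-linear map $x\mapsto[1:x{-}u]$ does have a graph with the diagonal class, but that graph does not land in the conormal fiber over $[1:x]$, and for generic $u$ it misses $N_Y$ entirely, consistent with the expected intersection dimension $-1$.

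In \cite{DHOST} this is repaired by replacing $\Gamma_u$ with the \emph{ED correspondence}, the closure of the locus of pairs $(x,u)$ with $x$ critical for $u$. That variety has dimension $n$ and is built from $N_Y$ by a join construction in the second factor; pairing its class with $h_2^{\,n}$ then genuinely yields $\sum_i\delta_i(Y)$. Your intuition about where transversality with $H_\infty$ and with the isotropic quadric enters is correct --- those hypotheses are exactly what prevent critical points from escaping to the boundary --- but the incidence variety and the bidegree bookkeeping must be set up as in \cite{DHOST}, not via the graph $\Gamma_u$.
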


The \emph{projective closure} of $X \subset \RR^n$
is its Zariski closure in complex projective space $\PP^n$,
which we will also denote by $X$. Theorem \ref{thm:polarED}
appears in \cite[Proposition 6.10]{DHOST}.
The hypothesis is stated precisely in \cite[equation (6.4)]{DHOST}.
It holds for all $X$ after a general linear change of coordinates.
We now explain what the polar degrees of a variety $X \subset \PP^n$ are. Points
$h $ in the dual projective space $(\PP^n)^\vee$
represent hyperplanes  $\{ x \in \PP^n: h_0 x_0 + \cdots + h_n x_n = 0 \}$.
We are interested in all pairs $(x,h)$ in $\PP^n \times (\PP^n)^\vee$ such that
$x $ is a nonsingular point of $X$ and $h$ is tangent to $X$ at $x$.
The Zariski closure of this set is the \emph{conormal variety} $N_X \subset \PP^n \times 
(\PP^n)^\vee$.

It is known that $N_X$ has dimension $n-1$, and  if $X$ is irreducible then so is $N_X$.
The image of $N_X$ under projection onto the second factor is the dual variety $X^\vee$.
The role of $x \in \PP^n$ and $h \in (\PP^n)^\vee$ can be swapped.
The following biduality relations  \cite[\S I.1.3]{GKZ} hold:
$$ N_X = N_{X^\vee} \quad \text{and} \quad (X^\vee)^\vee = X. $$
The class of $N_X$ in the cohomology ring 
$ H^*( \PP^n {\times} (\PP^n)^\vee\! ,\, \ZZ) =  \ZZ[s,t]/\langle s^{n+1},t^{n+1} \rangle $ 
has the~form
$$ [N_X]\, = \,
\delta_1(X) s^n t \,+\, \delta_2(X) s^{n-1} t^2 \,+\, \delta_3(X) s^{n-2} t^3  \,+\, \cdots \,+\, 
\delta_n(X) s t^{n}.
$$
The coefficients $\delta_i(X)$ of this binary form are nonnegative integers,
known as \emph{polar degrees}.

\begin{remark}
The polar degrees satisfy
$\delta_i(X)  = \#( N_X \,\cap \, (L \times L'))$, where
$L \subset \PP^n$ and $ L' \subset (\PP^n)^\vee$ are general linear subspaces
of dimensions $n+1-i\,$ and $\,i$ respectively.
This geometric interpretation implies that $\delta_i(X) = 0$ for
$i < \text{codim}(X^\vee)$ and for $i > \text{dim}(X)+1$.
\end{remark}

\begin{example}
Let $X$ be a general surface of degree $d$ in $\PP^3$.
Its dual $X^\vee$ is a surface of degree $d(d-1)^2$ in $(\PP^3)^\vee$.
The conormal variety $N_X$ is a surface in $\PP^3 \times (\PP^3)^\vee$, with class
$$  [N_X] \,\,\, = \,\,\, d(d-1)^2 \,s^3 t \,\,+\,\, d(d-1) \,s^2 t^2 \,\,+ \,\, d \,st^3. $$
The sum of the three polar degrees equals $\text{EDdegree}(X) = d^3 -d^2+d$;
see Proposition \ref{prop:implgeneric}.
\end{example}

Theorem \ref{thm:polarED} allows us to compute the ED degree
for many interesting varieties, e.g.~using Chern classes
\cite[Theorem 5.8]{DHOST}. This is relevant for applications in machine learning
\cite{Kohn} which rest on
low-rank approximation of matrices and tensors with special structure \cite{OPS}.

The discussion so far was restricted to the Euclidean norm.
But, we can measure distances in $\RR^n$ with any other
norm $|| \,\cdot\,||$. Our optimization problem 
(\ref{eq:EDproblem}) extends naturally:
\begin{equation}
\label{eq:normproblem}
 \text{minimize} \,\, || x - u || \,\, \text{subject to} \,\,\, x \in X . 
 \end{equation}
The unit ball $B = \{ x \in \RR^n : || x || \leq 1 \}$
is a centrally symmetric convex body. Conversely,~every
centrally symmetric
 convex body $B$ defines a norm, and we can 
paraphrase (\ref{eq:normproblem}) as follows:
\begin{equation}
\label{eq:normproblem2}
 \text{minimize} \,\,\lambda \,\, \text{subject to} \,\,\, \lambda \geq 0 \,\,\text{and} \,\,
 (u + \lambda B)  \, \cap \, X \,\,\not = \,\, \emptyset. 
 \end{equation}

If the boundary of $B$ is smooth and algebraic then we 
express the critical equations as a polynomial system.
This is derived as before, but we now replace
the first row of the augmented Jacobian matrix
$\mathcal{AJ}$ with the gradient of the map $\, \RR^n \rightarrow \RR, \,x \mapsto || x-u||$.

Another case of interest arises when 
$|| \cdot ||$ is a \emph{polyhedral norm}.
This means that $B$ is a centrally symmetric polytope.
Familiar examples of polyhedral norms
are $|| \cdot ||_\infty$ and $|| \cdot ||_1$, where
$B$ is the cube and the crosspolytope respectively.
In optimal transport theory, one uses 
a Wasserstein norm \cite{wasserstein} whose unit ball $B$ is
the polar dual of a Lipschitz polytope.

To derive the critical equations, a combinatorial
stratification of the problem is used, given by the face poset of the polytope $B$.
Suppose that $X$ is in general position. Then
$(u + \lambda^* B)  \, \cap \, X \, = \, \{x^*\}$ is a singleton
for the optimal value $\lambda^*$ in
   (\ref{eq:normproblem2}). 
The point  $\frac{1}{\lambda^*}(x^*-u)$ lies in the relative interior
of a unique face $F$ of the unit ball $B$.
   Let $L_F$ denote the linear span of $F$ in~$\RR^n$.
We have $   \text{dim}(L_F) = \text{dim}(F)+1$.
      Let $\ell$ be any linear functional on $\RR^n$ that attains its minimum over 
      the polytope $B$ at the face $F$. We view $\ell$ as a point in $(\PP^n)^\vee$.

\begin{lemma}
The optimal point   $x^*$ in (\ref{eq:normproblem}) is the unique solution to the  optimization problem
   \begin{equation}
   \label{eq:minell}  \text{Minimize} \,\,\, \ell(x)\,\,\, \text{subject to} \,\, x \in (u + L_F) \cap X.  
\end{equation}   
\end{lemma}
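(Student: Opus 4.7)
My plan is to combine the local linearization of the polyhedral norm on the cone generated by $F$ with the optimality of $x^*$ in (\ref{eq:normproblem}).

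First, I check that $x^*$ is feasible for (\ref{eq:minell}): we have $x^* \in X$ by construction, and $x^* - u = \lambda^* f^* \in L_F$ because $f^* \in F \subset L_F$ and $L_F$ is a linear subspace (hence closed under the scaling by $\lambda^*$).

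The core of the argument is to linearize the polyhedral norm $||\cdot||$ on the cone $C := \{\mu f : \mu \geq 0,\, f \in F\}$. The dimension identity $\text{dim}(L_F) = \text{dim}(F) + 1$ together with $0 \notin F$ makes $C$ full-dimensional in $L_F$. Since $f^* \in \text{relint}(F)$ and $\lambda^* > 0$, the vector $v^* := x^* - u = \lambda^* f^*$ lies in the relative interior of $C$, so some open neighborhood $U \subset L_F$ of $v^*$ is contained in $C$. On $C$, every nonzero vector has a unique representation $v = \mu f$ with $\mu > 0,\, f \in F$, and then $||v|| = \mu$ while $\ell(v) = \mu c$, where $c$ denotes the nonzero common value of $\ell$ on $F$. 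Hence on $C$ the polyhedral norm and the linear functional $\ell$ are proportional: $||v|| = \ell(v)/c$.

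Combining these ingredients, for any $x \in (u+L_F)\cap X$ close enough to $x^*$, the vector $v := x-u$ lies in $U$, and the identity above converts the optimality bound $||x-u|| \geq \lambda^*$ into a comparison between $\ell(x)-\ell(u) = c\cdot ||x-u||$ and $\ell(x^*)-\ell(u) = c\lambda^*$. With the sign of $c$ fixed by the extremal convention of the statement, this places $x^*$ as the extremum of $\ell$ on $(u+L_F)\cap X$ near $x^*$. Equality in the comparison forces $||x-u|| = \lambda^*$, so $x \in (u+\lambda^* B)\cap X = \{x^*\}$ by the general position of $X$, and uniqueness follows.

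The principal subtlety is that $||\cdot||$ is only piecewise linear on $L_F$, and the identity $||v|| = \ell(v)/c$ is valid precisely on the subcone $C = \text{cone}(F)$, not on all of $L_F$. The hypothesis $f^* \in \text{relint}(F)$ is exactly what guarantees that a full open neighborhood of $v^*$ in $L_F$ sits inside $C$, so that the complicated polyhedral optimization (\ref{eq:normproblem2}) reduces, locally near $x^*$, to the linear-objective problem (\ref{eq:minell}) over the affine variety $(u+L_F)\cap X$. Passing from this local equivalence to the uniqueness claim is then handled by the general-position hypothesis on $X$, which prevents other extrema of $\ell$ on $(u+L_F)\cap X$ from competing with $x^*$.
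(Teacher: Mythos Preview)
Your approach is essentially the paper's: both show that $x^*$ is a local extremum of $\ell$ on $(u+L_F)\cap X$ and then invoke general position for uniqueness. You are considerably more explicit than the paper's three-sentence proof about the key local ingredient, namely the identity $\|v\|=\ell(v)/c$ on the cone $C=\mathrm{cone}(F)$, which is exactly what converts the polyhedral norm into a linear objective near $x^*$; the paper leaves this implicit in the bare assertion ``$x^*$ is a minimum of the restriction of $\ell$.''

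Where your argument is visibly incomplete is the final step: you establish only a \emph{local} extremum (for $x-u$ in a neighborhood $U\subset C$) and then assert that general position ``prevents other extrema \ldots\ from competing with $x^*$.'' General position does give finitely many critical points with distinct $\ell$-values, but that alone does not single out $x^*$ as the \emph{global} optimizer---for instance, when $F$ is a vertex and $u+L_F$ is a line meeting a curve $X$ in several points, the one with smallest $\ell$ need not be $x^*$. The paper's proof is no more careful on this passage (it simply declares ``$x^*$ is a minimum'' and then notes that the critical values are distinct), so you match its level of rigor. Your hedge on the sign of $c$ is also warranted: with the stated convention that $\ell$ attains its \emph{minimum} over $B$ at $F$, one has $c<0$, and your local computation then makes $x^*$ a local \emph{maximum} of $\ell$ rather than a minimum, suggesting a sign slip in the setup that you were right not to paper over.
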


\begin{proof}
The general position hypothesis ensures that
$u+L_F$ intersects $X$ transversally, and $x^*$ is a smooth point of that intersection.
Moreover, $x^*$ is a minimum of the restriction of $\ell$ to the variety
$ (u + L_F) \cap X $. By our hypothesis, this linear function is generic
relative to the variety, so the number of critical points is finite and the function values are distinct.
\end{proof}

The problem (\ref{eq:minell}) amounts to linear programming over
a real variety.  We now determine the algebraic degree
of this optimization task when $F$ is a face of codimension $i$.

\begin{proposition} \label{prop:polari}
Let $L$ be a general affine-linear space of 
codimension $i-1$ in $\RR^n$ and $\ell$ a general linear form.
The  number of critical points of $\ell$ on $L \cap X$ is the polar
degree $\delta_i(X)$.
\end{proposition}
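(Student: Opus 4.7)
The plan is to translate the critical-point condition into an intersection with the conormal variety $N_X$, then invoke the geometric interpretation of polar degrees from the remark.

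By Lagrange multipliers, a smooth point $x \in L \cap X$ is a critical point of $\ell|_{L \cap X}$ precisely when $\nabla \ell \in L_0^\perp + N^*_x X$, where $L_0 \subset \RR^n$ is the direction of $L$ and $N^*_x X$ is the conormal space of $X$ at $x$. Equivalently, there exist $\alpha \in L_0^\perp$ and $\beta \in N^*_x X$ with $\nabla \ell = \alpha + \beta$; generic position of $L$ relative to $X$ forces $L_0^\perp \cap N^*_x X = 0$, so this decomposition is unique. To each critical $x$ I attach the hyperplane $H \subset \PP^n$ through $x$ whose affine normal direction is $\beta = \nabla \ell - \alpha$; since $\beta$ annihilates $T_x X$, the hyperplane $H$ is projectively tangent to $X$ at $x$, so the pair $(x, [H])$ lies in $N_X$.

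The key geometric observation is that every such hyperplane $H$ lies in a common projective linear subspace $L' \subset (\PP^n)^\vee$ of dimension $i$. A direct computation shows that the restriction $H \cap \overline L$ is the projective closure of a level set of $\ell|_L$; since all such level sets share the same boundary at infinity, each $H$ contains the projective linear subspace $P := \overline L \cap V(x_0) \cap V(\ell)$ of $\PP^n$, which has dimension $n - i - 1$. Hence $L' := \{H \in (\PP^n)^\vee : H \supset P\}$ has projective dimension $i$, and, under the genericity assumptions, the assignment $x \mapsto (x,[H])$ gives a bijection between the critical points of $\ell|_{L \cap X}$ and the set $N_X \cap (\overline L \times L')$, where $\overline L \subset \PP^n$ is the projective closure of $L$, of dimension $n - i + 1$.

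The count now follows from the remark: since $\dim \overline L = n+1-i$ and $\dim L' = i$, one has $\#\bigl(N_X \cap (\overline L \times L')\bigr) = \delta_i(X)$. The main technical obstacle is to verify the genericity: although $L'$ is constrained to contain the distinguished point $[1{:}0{:}\cdots{:}0] \in (\PP^n)^\vee$ representing the hyperplane at infinity of $\RR^n$, it must still be transverse to $N_X$. This follows from the standard hypothesis that $X$ meets the hyperplane at infinity transversally, which places the dual variety $X^\vee = \pi_2(N_X)$ away from the distinguished point; as $L$ and $\ell$ vary generically, $L'$ ranges over a dense open subset of the corresponding Schubert variety of $i$-planes, on which the cohomological count coincides with $\delta_i(X)$.
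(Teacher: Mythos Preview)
Your approach is sound and genuinely different from the paper's. The paper does not unfold the conormal variety at all; it simply cites \cite[Theorem~5.1]{wasserstein} and records the two classical facts behind it: the number of critical points of a generic linear form on a variety $Y$ equals $\deg(Y^\vee)$, and for $Y = L \cap X$ with $L$ a general linear space of codimension $i-1$ one has $\deg((L\cap X)^\vee)=\delta_i(X)$. Your argument instead stays with $N_X$ itself, building an explicit bijection between the critical locus and $N_X \cap (\overline L \times L')$, then reading off the count from the remark on polar degrees. This is more self-contained relative to the present paper (it uses only the remark) and makes the role of the conormal variety completely transparent; the paper's route is shorter but outsources the content to the classical identification of polar degrees with classes of linear sections.

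One point in your plan deserves tightening. You invoke the remark, which computes $\#\bigl(N_X \cap (L\times L')\bigr)$ for \emph{independently} general $L$ and $L'$. In your construction the two are coupled: $L'$ always contains the annihilator $\overline L^{\,\perp}$ (the $(i{-}2)$-plane of hyperplanes through $\overline L$), in addition to the point $[1{:}0{:}\cdots{:}0]$. So $L'$ does not sweep out a dense open in the Schubert variety of $i$-planes through the distinguished point, as you assert; it moves in a smaller family. What you actually need is transversality of $N_X$ with $\overline L \times L'$ for generic $(L,\ell)$, which gives the cohomological count $\delta_i(X)$ regardless. Your observation that $[1{:}0{:}\cdots{:}0]\notin X^\vee$ rules out bad behavior at infinity on the $(\PP^n)^\vee$ side; you should also note that the coupling itself creates no extra intersections, since any $(x,H)\in N_X$ with $x\in\overline L$ and $H\supset\overline L$ would force $T_xX + \overline L$ into a hyperplane, contradicting that a general $\overline L$ meets $X$ transversally. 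With those two observations, a standard dimension count finishes the transversality claim.
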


\begin{proof} This result is
\cite[Theorem 5.1]{wasserstein}.
The number of critical points of a linear form is the degree of the
dual variety $(L \cap X)^\vee$. That degree coincides with the
polar degree $\delta_i(X)$.
\end{proof}

\begin{figure}[h]
\vspace{-0.4cm}
                \begin{center}
                       \qquad \includegraphics[scale=1.3]{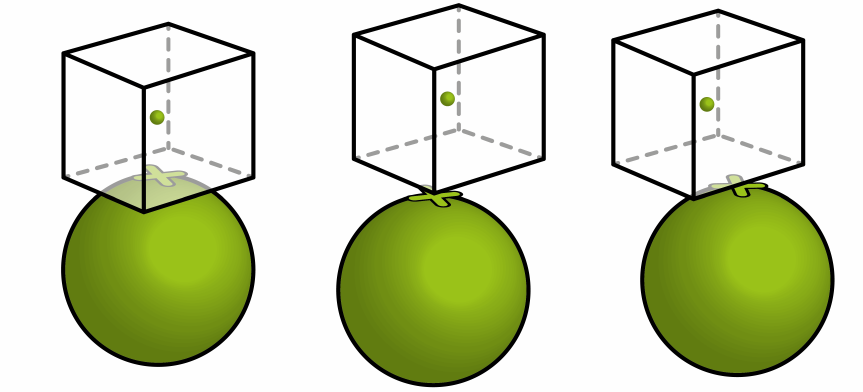}
                \end{center}
                \vspace{-0.6cm}
                \caption{
The cube is the $|| \cdot ||_\infty$ ball $\lambda^* B$
around the green point $u$. The variety $X$
is the sphere. The contact point $x^*$ is marked
with a cross. The optimal face $F$ is
a facet, a vertex, or an~edge.                                
                \label{fig:cubeball}}
        \end{figure}

\begin{example}
Consider (\ref{eq:normproblem}) and (\ref{eq:normproblem2}) where $X$ is 
a general surface of degree $d$ in $\RR^3$.  The optimal face $F$
of the unit ball $B$ depends on the location of the data point $u$.
This is shown for $d=2$ and $|| \cdot ||_\infty$  in Figure \ref{fig:cubeball}.
The algebraic degree of the solution $x^*$ equals
$\delta_3(X) = d$ if $\text{dim}(F)=0$,
it is $\delta_2(X) = d(d-1)$ if $\text{dim}(F)=1$,
and it is $\delta_1(X) = d(d-1)^2$ if $\text{dim}(F)=2$.
\end{example}

We conclude that the conormal variety $N_X$ and its cohomology class
$[N_X]$ are key players when it comes to  reliably solving the
distance minimization  problem for a  variety~$X $.
The polar degrees $\delta_i(X)$ reveal precisely how many paths
need to be tracked by numerical solvers like
\cite{Bertini, HCjl} in order to find
and certify \cite{BRT}  the optimal solution $x^*$ to
(\ref{eq:EDproblem}) or~(\ref{eq:normproblem}).

\section{Likelihood Geometry}
\label{sec3}

The previous section was concerned with  minimizing
the distance from a given data point $u$ to a model $X$
that is described by polynomial equations. In what follows we consider
the analogous problem in the setting of algebraic statistics \cite{Sul}, where
the model $X$ represents a family of probability distributions.
Distance to $u$ is replaced by the log-likelihood function.

The two scenarios of most interest for statisticians are
Gaussian models and discrete models.
We shall discuss them both, beginning with the Gaussian case.
Let $\text{PD}_n$ denote the open convex cone of positive-definite
symmetric $n \times n$ matrices.
Given a mean vector $\mu \in \RR^n$ and a covariance matrix $\Sigma \in \text{PD}_n$,
the associated {\em Gaussian distribution} on $\RR^n$ has the density
$$ f_{\mu,\Sigma}(x) \,\,\, := \,\,\, \frac{1}{\sqrt{(2 \pi)^n \,\text{det}\, \Sigma}} \cdot
\text{exp} \biggl( -\frac{1}{2} (x-\mu)^T \Sigma^{-1} (x-\mu) \biggr). $$
We fix a model $Y \subset \RR^n \times \text{PD}_n$
that is defined by polynomial equations in $(\mu,\Sigma)$.
Suppose we are given $N$ samples $U^{(1)}\!, \ldots, U^{(N)}$ in $\RR^n$.
These are summarized in the
\emph{sample~mean}
$\bar{U} = \frac{1}{N} \sum_{i=1}^N U^{(i)}$
and in the \emph{sample covariance matrix}
$S = \frac{1}{N} \sum_{i=1}^N (U^{(i)} - \bar{U})(U^{(i)} - \bar{U})^T$.
Given these data,
the log-likelihood  is the following function in the unknowns $(\mu,\Sigma)$:
\begin{equation}
\label{eq:loglikegaussian}
\ell(\mu,\Sigma) \quad = \quad - \frac{N}{2} \cdot \biggl[\,
\text{log}\, \text{det}\, \Sigma  \,\,+ \,\, \text{trace}(S \Sigma^{-1}) \,\, + \,\,
(\bar{U} - \mu)^T  \Sigma^{-1} (\bar{U} - \mu) \biggr].
\end{equation}
The task of likelihood inference is to minimize this function subject to
$(\mu,\Sigma) \in Y$.

There are two extreme cases. First, consider
a model where $\Sigma$ is fixed to be the identity matrix  $\text{Id}_n$. Then
$Y = X \times \{\text{Id}_n\}$ and we are supposed to minimize
the Euclidean distance from the sample mean $\bar{U}$
 to the variety $X$ in $\RR^n$. 
This is precisely our problem~(\ref{eq:EDproblem}).

We instead focus on the second case, the family of {\em centered Gaussians},
where $\mu$ is fixed at zero.
The model has the form
$\{0\} \times X$, where $X$ is a variety in the space
$\text{Sym}_2(\RR^n)$ of symmetric $n \times n$ matrices.
Following \cite[Proposition 7.1.10]{Sul},
 our task is now as follows:
\begin{equation}
\label{eq:loglikegaussian2}
\text{Minimize the function} \,\,\,
\,\Sigma \,\,\mapsto\,\,
\text{log}\, \text{det}\, \Sigma  \,\,+ \,\, \text{trace}(S \, \Sigma^{-1}) \quad
\text{subject to} \,\, \Sigma \in X. 
\end{equation}
Using the concentration matrix $K = \Sigma^{-1}$, we can write this equivalently as follows:
\begin{equation}
\label{eq:loglikegaussian3}
\text{Maximize the function} \,\,\,
\,\Sigma \,\,\mapsto\,\,
\text{log}\, \text{det}\, K \,\,- \,\, \text{trace}(S \, K) \quad
\text{subject to} \,\, K \in X^{-1}.
\end{equation}
Here the variety $X^{-1}$ is the Zariski closure of the set of inverses of all matrices in $X$.

The critical equations of the optimization problem (\ref{eq:loglikegaussian3})
can be written as polynomials, since the partial derivatives of
the logarithm are rational functions. These equations
have finitely many complex solutions. Their number is
the {\em ML degree} of the model $X^{-1}$.

Let $\mathcal{L} \subset \text{Sym}_2(\RR^n)$ be
a linear space of symmetric matrices (LSSM),
whose general element is assumed to be invertible.
We are interested in the models $X^{-1} = \mathcal{L}$
and $X = \mathcal{L}$.
It is convenient to use primal-dual coordinates
$(\Sigma,K)$ to write the respective critical equations.

\begin{proposition} Fix an LSSM $ \mathcal{L}$ and its
orthogonal complement  $\mathcal{L}^\perp$
for the inner product $\langle X,Y \rangle = \text{trace}(XY)$.
The critical equations for the \emph{linear concentration model} $X^{-1} = \mathcal{L}$~are
\begin{equation}
\label{eq:linconc} K \in \mathcal{L} \,\,\,\text{and} \,\,\,
K \Sigma = \text{Id}_n  \,\,\,\text{and} \,\,\,
\Sigma - S \in \mathcal{L}^\perp . 
\end{equation}
The critical equations for the \emph{linear covariance model} $X = \mathcal{L}$ are
\begin{equation}
\label{eq:lincova} \Sigma \in \mathcal{L} \,\,\,\text{and} \,\,\,
K \Sigma = \text{Id}_n  \,\,\,\text{and} \,\,\,
KSK - K \in \mathcal{L}^\perp . 
\end{equation}
\end{proposition}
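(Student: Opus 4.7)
The plan is to apply Lagrange multipliers to each of the two optimization problems (\ref{eq:loglikegaussian3}) and (\ref{eq:loglikegaussian2}). Because the feasible set $\mathcal{L}$ is a linear subspace of $\text{Sym}_2(\RR^n)$, its tangent space at every point equals $\mathcal{L}$ itself. Thus a feasible point is critical precisely when the gradient of the objective, computed on all of $\text{Sym}_2(\RR^n)$ with respect to the inner product $\langle X, Y \rangle = \text{trace}(XY)$, lies in the orthogonal complement $\mathcal{L}^\perp$. The proof then reduces to computing these gradients and repackaging the result in primal-dual coordinates $(\Sigma, K)$ tied together by the defining relation $K\Sigma = \text{Id}_n$.

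First I would record the matrix-calculus identities one needs on the cone $\text{PD}_n$, namely
\[
\nabla_K \log \det K \,=\, K^{-1}, \qquad \nabla_K \text{trace}(SK) \,=\, S, \qquad \nabla_\Sigma \text{trace}(S \Sigma^{-1}) \,=\, -\Sigma^{-1} S \Sigma^{-1},
\]
each a direct consequence of $d(A^{-1}) = -A^{-1}(dA)A^{-1}$ and the cyclicity of trace. For the linear concentration model (\ref{eq:loglikegaussian3}), the gradient of $\log \det K - \text{trace}(SK)$ is $K^{-1} - S = \Sigma - S$, so criticality reads $\Sigma - S \in \mathcal{L}^\perp$; joining this to the primal constraint $K \in \mathcal{L}$ and the inversion identity $K\Sigma = \text{Id}_n$ yields (\ref{eq:linconc}). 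For the linear covariance model (\ref{eq:loglikegaussian2}), the gradient of $\log \det \Sigma + \text{trace}(S \Sigma^{-1})$ is $\Sigma^{-1} - \Sigma^{-1} S \Sigma^{-1} = K - KSK$, so criticality reads $KSK - K \in \mathcal{L}^\perp$; combined with $\Sigma \in \mathcal{L}$ and $K\Sigma = \text{Id}_n$ this produces (\ref{eq:lincova}).

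The one step I expect to require a moment of care is the compatibility between the differential calculus on the ambient matrix space and the symmetric space in which $\mathcal{L}^\perp$ is defined. Since $K$, $\Sigma$ and $S$ are symmetric, the gradients $K^{-1}$, $S$ and $-\Sigma^{-1}S \Sigma^{-1}$ listed above are automatically symmetric, so no extra symmetrization is needed and the orthogonality test against $\mathcal{L} \subset \text{Sym}_2(\RR^n)$ is the correct one. A secondary point worth noting is that, although the log-likelihood is only defined on $\text{PD}_n$, the resulting critical equations are polynomial (after clearing the relation $K\Sigma = \text{Id}_n$) and therefore extend verbatim to the complex affine varieties whose cardinalities define the ML degree. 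With these two observations in place, both systems (\ref{eq:linconc}) and (\ref{eq:lincova}) follow immediately from the Lagrange condition.
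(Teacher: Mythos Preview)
Your proposal is correct: the Lagrange/gradient argument you outline is exactly the standard derivation, and your handling of the symmetric-matrix gradient is fine since the directional derivatives $\text{trace}(K^{-1}H)$ and $-\text{trace}(\Sigma^{-1}S\Sigma^{-1}H)$ already identify the gradients within $\text{Sym}_2(\RR^n)$ without symmetrization. The paper itself does not give an in-text proof but simply cites \cite[Propositions~3.1 and~3.3]{STZ}, where the same computation is carried out; your write-up is essentially that argument made explicit.
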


\begin{proof}
This is well-known in statistics. For proofs see \cite[Propositions 3.1 and 3.3]{STZ}.
\end{proof}

The system (\ref{eq:linconc}) is linear in $K$, but
the last group of equations in (\ref{eq:lincova}) is quadratic in $K$.
The numbers of complex
solutions are the \emph{ML degree} of $\mathcal{L}$ and
the \emph{reciprocal ML degree} of $\mathcal{L}$.
The former is  smaller than the latter,
and (\ref{eq:linconc}) is easier to solve than (\ref{eq:lincova}).

\begin{example} \label{ex:siebzehn}
Let $n=4$ and $\mathcal{L}$ a generic LSSM of dimension $k$. Our degrees are as follows:
$$ 
\begin{matrix}
k = \text{dim}(\mathcal{L}): \qquad & \qquad 2 \quad & \quad 3 \quad & 
\quad 4 \quad & \quad 5 \quad & \quad 6 \quad & \quad 7 
\quad & \quad 8 \quad & \quad 9 
\\
\text{ML degree}: \qquad  & \qquad  3 \quad & \quad 9 \quad & \quad 17 \quad &
\quad  21 \quad & \quad 21 \quad & \quad 17 \quad & \quad 9 \quad & \quad 3 \quad 
\\
\text{reciprocal ML degree}:  \qquad & \qquad 5 \quad & \quad 19 \quad  & 
\quad 45 \quad & \quad 71 \quad & \quad 81 \quad  & \quad 63 \quad & \quad 
29 \quad  & \quad  7 
\\
\end{matrix}
$$
These numbers and many more appear in \cite[Table 1]{STZ}.
\end{example}

ML degrees and the reciprocal ML degrees have been studied intensively
in the recent literature, both for generic and special spaces $\mathcal{L}$.
See \cite{AGKMS, BCEMR, EFSS} and the references therein.
We now present an important result due to
Manivel, Micha{\l}ek, Monin, Seynnaeve, Vodi\v{c}ka  and Wi\'sniewski.
Theorem \ref{thm:schubertML} paraphrases highlights from their articles  \cite{MMMSV, MMW}.

\begin{theorem} \label{thm:schubertML}
The ML degree of a generic linear subspace $\mathcal{L}$ 
of dimension $k$
in $\text{Sym}_2(\RR^n)$ is the number
of quadrics in $\PP^{n-1}$ that pass through
$\binom{n+1}{2}-k$ general points and
are tangent to 
$k-1$ general
 hyperplanes.
 For fixed $k$, this number is a polynomial in $n$ of degree $k-1$. 
\end{theorem}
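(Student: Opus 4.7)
The plan is to recast the critical equations (\ref{eq:linconc}) as an intersection problem on the variety of complete quadrics, and then read off the answer as a classical characteristic number.

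First I would eliminate $\Sigma = K^{-1}$ from (\ref{eq:linconc}) to obtain the equivalent system $K \in \mathcal{L}$ together with $\mathrm{trace}(K^{-1}L) = \mathrm{trace}(SL)$ for every $L \in \mathcal{L}$. Specializing the latter group to $L = K$ produces the scale-fixing relation $\mathrm{trace}(SK) = n$, so after passing to $[K] \in \PP(\mathcal{L})$ the remaining $k-1$ equations become the projective conditions $s_j\,\mathrm{trace}(K^{-1}L_i) = s_i\,\mathrm{trace}(K^{-1}L_j)$. Each of these is the pullback of a hyperplane in the dual projective space $\PP(\mathrm{Sym}_2(\CC^n))^\vee$ through the rational map $[K]\dashrightarrow [K^{-1}]$.

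Next, to compute the number of solutions I would lift the problem to the variety of complete quadrics $\overline{\mathcal{CQ}}_n$, a smooth compactification of the open locus of smooth quadrics in $\PP^{n-1}$ on which the birational map $[K] \mapsto [K^{-1}]$ becomes regular. Let $H$ denote the pullback of a generic hyperplane in $\PP(\mathrm{Sym}_2(\CC^n))$, which represents the divisor class \emph{quadric through a general point}, and let $P$ be the pullback of a generic hyperplane in the dual space, which represents \emph{quadric tangent to a general hyperplane}. By genericity of $\mathcal{L}$, each of the $\binom{n+1}{2}-k$ linear conditions from $K \in \mathcal{L}$ contributes the class $H$, and each of the $k-1$ projective conditions obtained above contributes the class $P$. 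Since $\dim \overline{\mathcal{CQ}}_n = \binom{n+1}{2}-1$, the ML degree equals the intersection number $H^{\binom{n+1}{2}-k}\!\cdot P^{k-1}$ on $\overline{\mathcal{CQ}}_n$, which is precisely the number of smooth quadrics in $\PP^{n-1}$ through $\binom{n+1}{2}-k$ general points and tangent to $k-1$ general hyperplanes.

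Finally, for the polynomiality claim with $k$ fixed, I would invoke the De Concini--Procesi description of $\overline{\mathcal{CQ}}_n$ as an iterated blowup of $\PP(\mathrm{Sym}_2(\CC^n))$ along the rank-degeneration loci. In this model $H$ is the strict transform of the ambient hyperplane class, while each factor of $P$ picks up a correction from the exceptional divisors, and the dimension of the $i$-th such divisor depends linearly on $n$. Expanding $H^{\binom{n+1}{2}-k}\cdot P^{k-1}$ in the Chow ring then yields a polynomial of degree $k-1$ in $n$, as claimed. The main obstacle will be the second step: carefully verifying that the projectivized critical system really does pull back, transversally, to the cycle $H^{\binom{n+1}{2}-k}\cdot P^{k-1}$ on $\overline{\mathcal{CQ}}_n$, with no excess contributions from boundary strata where $K$ or $K^{-1}$ degenerates in rank.
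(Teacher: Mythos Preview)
Your approach is correct and is precisely the strategy of the papers \cite{MMW, MMMSV} that constitute the paper's own proof: the first claim is attributed to \cite[Corollary 2.6(4)]{MMW} via the variety of complete quadrics, and the polynomiality to \cite[Theorem 1.3 and Corollary 4.13]{MMMSV}. Your intersection-theoretic setup on $\overline{\mathcal{CQ}}_n$ and the identification of the ML degree with the characteristic number $H^{\binom{n+1}{2}-k}\cdot P^{k-1}$ are exactly what those references do, and the transversality issue you flag, together with the polynomiality argument (which in \cite{MMMSV} is considerably more delicate than your sketch suggests, going through Lascoux coefficients rather than a direct blowup expansion), are indeed the technical core of those works.
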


\begin{proof}
The first statement is
 \cite[Corollary 2.6 (4)]{MMW}, here interpreted classically
   in terms of Schubert calculus. 
For a detailed discussion see the introduction of \cite{MMMSV}.
The second statement appears in
 \cite[Theorem 1.3 and Corollary 4.13]{MMMSV}. It proves a
conjecture of Sturmfels and Uhler.
\end{proof}

\begin{example}[$n=4$]
Fix $10-k$ points
and $k-1$ planes in $\PP^3$. We seek 
quadratic surfaces containing the points and
tangent to the planes. This imposes $9$ constraints on
$\PP(\text{Sym}_2(\CC^4)) \simeq \PP^9$. Passing through a point
is a linear equation. Being tangent to a plane is a cubic equation.
B\'ezout's Theorem  suggests that there could be $3^{k-1}$ solutions.
This is correct for $k \leq 3$ but it overcounts for $k \geq 4$. Indeed,
in Example \ref{ex:siebzehn} we see
$17,21,21,\ldots$ instead of $27,81,243,\ldots$.
\end{example}

The intersection theory in \cite{MMW, MMMSV} leads to formulas
for the ML degrees of linear Gaussian models. From this we obtain
provably correct numerical
methods for maximum likelihood estimation. Namely,
after computing critical points as in \cite{STZ},
we can certify them as in \cite{BRT}.
Since the ML degree is known, one can be sure that
all solutions have been found.

\smallskip

We now shift gears and turn our attention to discrete statistical models.
We take the state space to be $\{0,1,\ldots,n\}$.
The role of the cone $\text{PD}_n$ is played by the probability simplex
\begin{equation}
\label{eq:simplex}
\Delta_n \,\,\, = \,\,\, \bigl\{\,
p = (p_0,p_1,\ldots,p_n) \in \RR^{n+1} \,:\,
p_0+p_1+\cdots+p_n = 1 \,\,\text{and}\,\,
p_0,p_1,\ldots,p_n > 0 \,\bigr\}.
\end{equation}
Our model is a subset $X$ of $\Delta_n$  defined
by polynomial equations. As before, for venturing beyond linear algebra,
we identify $X$ with its Zariski closure in complex projective space~$\PP^n$.

We shall present the algebraic approach to
 maximum likelihood estimation (MLE). See~\cite{CHKS, DMS, Huh14, HS, HKS, Sul}
 and references therein.
Suppose we are given $N$ i.i.d.~samples. These are summarized in
the data vector $u = (u_0,u_1,\ldots,u_n)$ where $u_i$ is the number of
times state $i$ was observed. Note that $N = u_0 + \cdots + u_n$.
The associated log-likelihood function equals
$$ \ell_u : \Delta_n \rightarrow \RR \, ,\,\, p \,\mapsto \,
u_0 \cdot \text{log}(p_0) \,+ \,
u_1 \cdot \text{log}(p_1) \,+ \,\cdots + \,u_n \cdot \text{log}(p_n). $$
Performing MLE for the model $X$ means solving the following optimization problem:
\begin{equation}
\label{eq:MLE} \text{Maximize} \,\, \ell_u(p) \,\,
\text{subject to} \,\, p \in X.
\end{equation}
The \emph{ML degree} of $X$ is the number of
complex critical points of (\ref{eq:MLE}) for generic data $u$.
The optimal solution is denoted $\hat p$ and called the \emph{maximum likelihood estimate}
for the data $u$.

The critical equations for (\ref{eq:MLE}) are similar to those of (\ref{eq:EDproblem}).
Let $I_X = \langle f_1,\ldots,f_k \rangle + \langle  p_0+p_1+ \cdots + p_n- 1 \rangle $
 be the  defining ideal of the model.
Let $\mathcal{J} = \bigl( \partial f_i /\partial p_j \bigr)$ denote the
Jacobian matrix of size $(k+1) \times (n+1)$, and set $c = \text{codim}(X)$.
The augmented Jacobian
$\mathcal{AJ}$ is obtained by prepending one more row, namely  the
gradient of the objective function 
$$ \nabla \ell_u \,\, = \,\, \bigl(\,u_0/p_0,\, u_1/p_1,\,\ldots\,,\,u_n/p_n\, \bigr) . $$
To obtain the critical equations, enlarge $I_X$ by the $c \times c$ minors of
the $(k+2) \times (n+1)$ matrix $\mathcal{AJ}$, then clear denominators,
and finally remove extraneous components by saturation.

\begin{example}[Space curves]
Let $n=3$ and $X$ the curve in $\Delta_3$ defined by
two general  polynomials $f_1$ and $f_2$ of degrees 
$d_1$ and $d_2$ in $p_0,p_1,p_2,p_3$. The augmented Jacobian matrix~is
\begin{equation}
\label{eq:AJ2}
 \mathcal{AJ} \,\, = \,\,\,\, \begin{pmatrix} 
u_0 / p_0 && u_1/p_1 && u_2 / p_2 && u_3 / p_3 \\
  1 && 1 && 1 && 1 \\
  \partial f_1 / \partial p_0 &\,\,\,\,&
\partial f_1 / \partial p_1 &\,\,\,\,& \partial f_1 / \partial p_2 &\,\,\,\,&\partial f_1 / \partial p_3 \\
\partial f_2 / \partial p_0 &\,\,\,\,&
\partial f_2 / \partial p_1 &\,\,\,\,& \partial f_2 / \partial p_2 &\,\,\,\,&\partial f_2 / \partial p_3 \\
\end{pmatrix}.
\end{equation}
Clearing denominators amounts to multiplying the $i$th column by $p_i$, so 
the determinant contributes a polynomial of degree $d_1+d_2+1$
to the critical equations.
Since the generators of $I_X$ have degrees $d_1,d_2,1$, we conclude that
the ML degree of $X$ equals $d_1d_2(d_1+d_2+1)$.
\end{example}

The following  MLE analogue to Proposition \ref{prop:implgeneric}
 is established in \cite[Theorem 5]{HKS}.

\begin{proposition} \label{prop:implgeneric2}
Let $X $ be a model of codimension $c$ in $\Delta_n$ whose ideal $I_X$ is generated by
polynomials $f_1,f_2, \ldots,f_c, \ldots , f_k\,$ of degrees
$\,d_1\ge d_2\ge\cdots \ge d_c\ge\cdots \ge d_k$.
Then
\begin{equation}
\label{eq:MLdegCI}
 \text{MLdegree}(X) \,\,\,\le \,\,\,\,
d_1 d_2 \cdots d_c \cdot \!\!\!\!\!\!\!\!\!\!\!\!
\sum_{i_1+i_2+\cdots+i_c \leq n-c} \!\!\! \!\! d_1^{i_1} d_2^{i_2} \cdots d_c^{i_c}.
\end{equation}
Equality holds when $X$ is a generic complete intersection of codimension $c$
(hence $c=k$).
\end{proposition}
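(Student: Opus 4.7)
The plan is to mirror the proof sketch of Proposition~\ref{prop:implgeneric}: bound $\text{MLdegree}(X)$ via B\'ezout's theorem applied to the intersection of $X$ with the determinantal variety $V$ carved out by the rank condition on $\mathcal{AJ}$.

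First I would bound the degree of the projective closure of $X$ in $\PP^n$ by $d_1 d_2 \cdots d_c$, using B\'ezout on the codimension-$c$ generators $f_1,\ldots,f_c$, with equality when $X$ is a generic complete intersection. Next I would clear denominators in $\mathcal{AJ}$ by scaling column $j$ by $p_j$: the gradient row becomes the constants $(u_0,\ldots,u_n)$ of degree $0$, the simplex row becomes $(p_0,\ldots,p_n)$ of degree $1$, and the $(i+2)$-nd row becomes $(p_j\,\partial f_i/\partial p_j)_j$, homogeneous of degree $d_i$. Restricting to the complete intersection case $k=c$, the variety $V \subset \PP^n$ cut by the $(c+2)\times (c+2)$ minors of this scaled matrix is a corank-$1$ degeneracy locus of expected codimension $n-c$.

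The key step is then the Thom--Porteous degree formula for $V$. With row twists $0, 1, d_1, \ldots, d_c$, the degree equals the complete homogeneous symmetric polynomial $h_{n-c}(0,1,d_1,\ldots,d_c)$. The zero argument collapses the sum, leaving
\[
\deg(V) \;\le\; \sum_{i_1+i_2+\cdots+i_c\le n-c} d_1^{i_1} d_2^{i_2} \cdots d_c^{i_c},
\]
which is precisely the second factor in (\ref{eq:MLdegCI}). Combining this with $\deg(X)\le d_1\cdots d_c$ via B\'ezout on $X \cap V$ yields the asserted inequality.

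For equality in the generic complete intersection case, I would verify that $X$ is smooth and meets $V$ transversally in the expected number of points, so that no critical points are lost in passing from $X \cap V$ to the saturated critical ideal, and that none are absorbed into the coordinate hyperplanes $\{p_i=0\}$ created by clearing denominators, nor into the Jacobian rank-drop locus removed by saturation. The main obstacle is justifying the Thom--Porteous formula for row-twisted matrices and confirming that saturation is trivial under genericity; the former reduces to a Chern class computation on the cokernel bundle, and the latter to a dimension count ruling out components of the critical locus on the simplex boundary or on singular strata.
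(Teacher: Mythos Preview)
Your proposal is correct and follows exactly the strategy the paper uses: the paper itself does not prove this proposition but simply cites \cite[Theorem~5]{HKS}, while its sketch for the ED analogue (Proposition~\ref{prop:implgeneric}) and the space-curve Example preceding Proposition~\ref{prop:implgeneric2} spell out precisely the B\'ezout-on-$X\cap V$ argument you give, with the column-scaling trick producing row degrees $0,1,d_1,\ldots,d_c$. Your Thom--Porteous identification $\deg V = h_{n-c}(0,1,d_1,\ldots,d_c)=\sum_{i_1+\cdots+i_c\le n-c} d_1^{i_1}\cdots d_c^{i_c}$ is the right way to make the paper's informal degree count rigorous; the only point to tidy is that for $k>c$ you should pass to $c$ generic linear combinations of the $f_i$ before forming $\mathcal{AJ}$, so that the bound applies to the non-complete-intersection case as stated.
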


We next present the MLE analogue to Theorem \ref{thm:polarED}.
The role of the polar degrees is now played by the
Euler characteristic.
Consider $X$ in the complex projective space $\PP^n$,
and let $X^o$ be the open subset of $X$ that is obtained by
removing $\bigl\{p_0 p_1 \cdots p_n (\sum_{i=0}^n p_i) = 0\bigr\}$.
We recall from \cite{Huh13, Huh14} that
a \emph{very affine variety} is a closed subvariety of an algebraic torus~$(\CC^*)^r$. 

\begin{theorem} \label{thm:Euler}
Suppose that the very affine variety  $X^o$ is non-singular.
The ML degree of the model $X$ equals the signed Euler characteristic
$(-1)^{\text{dim}(X)} \cdot \chi(X^o)$ of the manifold $X^o$.
\end{theorem}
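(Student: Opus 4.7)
The plan is to identify $\text{MLdegree}(X)$ with the degree of the top Chern class of a logarithmic cotangent bundle, and then invoke a Chern--Gauss--Bonnet formula for very affine varieties.

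First, I would recast the critical equations for (\ref{eq:MLE}) in differential-form language. The log-likelihood $\ell_u = \sum_{i=0}^n u_i \log p_i$ is multivalued on $X^o$, but its differential
$$ d\ell_u \,=\, \sum_{i=0}^n u_i \, \frac{dp_i}{p_i} $$
is a single-valued algebraic $1$-form with at worst logarithmic poles on the complement of $X^o$. The critical points of $\ell_u$ on $X^o$ are precisely the zeros of the pullback of $d\ell_u$ to $X^o$. Hence $\text{MLdegree}(X)$ counts these zeros, with multiplicity, for generic data $u$.

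Next, since $X^o$ is smooth, I would invoke Hironaka to choose a smooth projective compactification $\bar{X}\supset X^o$ in which $D := \bar{X}\setminus X^o$ is a simple normal crossings divisor. Each $d\log p_i$ has at worst logarithmic poles along $D$, so $d\ell_u$ extends to a global section $\omega_u$ of the logarithmic cotangent bundle $\Omega^1_{\bar{X}}(\log D)$. As $u$ ranges over $\CC^{n+1}$, these $\omega_u$ span a linear system; using that $X^o$ embeds as a \emph{closed} subvariety of the algebraic torus with coordinates $p_i/(\sum_j p_j)$ (this is the very affine hypothesis), the forms $d\log(p_i/\sum p_j)$ generate the fiber of $\Omega^1_{\bar{X}}(\log D)$ at every point of $X^o$. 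Thus, for generic $u$, $\omega_u$ has only transverse zeros, all contained in $X^o$, and they are counted by $\deg c_{\dim X}\bigl(\Omega^1_{\bar{X}}(\log D)\bigr)$.

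Finally, I would apply the logarithmic Chern--Gauss--Bonnet identity
$$ \deg c_{\dim X}\bigl(\Omega^1_{\bar X}(\log D)\bigr) \,=\, (-1)^{\dim X} \chi(X^o), $$
obtained by combining the stratification identity $\chi(X^o) = \chi(\bar X) - \chi(D)$ with the classical $\chi(\bar X) = \deg c_{\dim X}(\Omega^1_{\bar X})$ and the standard residue sequences comparing $\Omega^1_{\bar X}(\log D)$ to $\Omega^1_{\bar X}$ along the SNC components of $D$. Combining this with the previous step yields the theorem.

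The hardest part will be the transversality/properness step: showing that for generic $u$, no zero of $\omega_u$ escapes to the boundary $D$ and that all interior zeros are simple. This is exactly where the hypothesis that $X^o$ is very affine (closed in its ambient torus) and nonsingular becomes essential---without closedness one can lose zeros off to infinity in the torus, and without smoothness the top Chern-class count fails to equal the actual number of critical points. Once this genericity is established, the remaining ingredients are classical intersection theory on $\bar X$.
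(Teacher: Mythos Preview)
The paper does not give a self-contained proof of this theorem; it merely cites \cite{CHKS} (with an additional smoothness hypothesis) and \cite{Huh13} (in full generality). Your proposal, by contrast, sketches an actual argument, essentially the classical route of \cite{CHKS} via logarithmic forms and Gauss--Bonnet.

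The outline is sound up to the step you yourself flag as hardest, and your identification of that step is exactly right. But your justification there has a gap. You observe that the forms $d\log(p_i/\sum_j p_j)$ generate $\Omega^1_{\bar X}(\log D)$ at every point of $X^o$---correct, and this follows from the closed embedding into the torus. From this you conclude that a generic $\omega_u$ has all its zeros in $X^o$. That inference does not follow: to rule out zeros on the boundary $D$ you would need the log forms to generate the fiber at \emph{boundary} points as well, or some substitute argument. This is not automatic from smoothness and very-affineness of $X^o$, and it is precisely where \cite{CHKS} had to impose the further hypothesis that the projective closure of $X$ meet the coordinate arrangement $\{p_0\cdots p_n(\sum_i p_i)=0\}$ transversally. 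Huh's contribution in \cite{Huh13} was to remove that hypothesis, but not by pushing the logarithmic-form argument through; he instead passed to characteristic cycles of constructible sheaves and invoked a theorem of Franecki--Kapranov on semi-abelian varieties. So your plan captures the geometry correctly and would prove the theorem under the stronger CHKS hypothesis, but in the stated generality the boundary step needs a genuinely different ingredient than the one you indicate.
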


\begin{proof}[Proof and Discussion]
This was proved with a further smoothness assumption in \cite[Theorem 19]{CHKS},
and in full generality in \cite[Theorem 1]{Huh13}. If $X^o$ is singular then
the Euler characteristic can be replaced by the
Chern-Schwartz-MacPherson class, as shown in \cite[Theorem`2]{Huh13}.
\end{proof}

Of special interest is the case when the ML degree is equal to one.
This means that the estimate $\hat p$ is a rational function of the data $u$.
Here are two examples where this happens.

\begin{example}[$n=3$]
The independence model for two binary random variables is a quadratic
surface $X$ in the tetrahedron $\Delta_3$. This model is described by the constraints
$$ \text{det} \begin{bmatrix} p_0 \, & \, p_1 \\                                                           
        p_2 \, & \, p_3 \end{bmatrix} \,= \, 0 \quad \text{and} \quad
        p_0+p_1+p_2+p_3 = 1 \quad \text{and} \quad
        p_0,p_1,p_2,p_3 > 0.
       $$
Consider data $u= \begin{small} \begin{bmatrix} u_0 \, & \, u_1 \\                                                           
        u_2 \, & \, u_3 \end{bmatrix}\end{small} $ of       
          \emph{sample size}  $ \,|u| = u_0{+}u_1{+}u_2{+}u_3$.
       The ML degree of the surface $X$
        equals one because the MLE $\hat p$ is a rational function of the data, namely
\begin{equation}      
\label{eq:rational1}     \begin{matrix}                                                                 
\hat p_0 \,=\, |u|^{-2} (u_0 {+} u_1)(u_0{+}u_2)  , \, \,\quad                                           
\hat p_1 \,=\, |u|^{-2} (u_0 {+} u_1)(u_1{+}u_3)  , \,    \\                                           
\hat p_2 \,=\, |u|^{-2} (u_2 {+} u_3)(u_0{+}u_2)  , \,   \, \quad                                        
\hat p_3 \,=\, |u|^{-2} (u_2 {+} u_3)(u_1{+}u_3).\,
\end{matrix}      
\end{equation}
In words, we multiply the
          row sums with the column sums in the
empirical distribution  $\frac{1}{|u|}u $.
\end{example}

\begin{example}[$n=2$] \label{ex:coins}
Given a biased coin, we perform the following experiment:
\emph{Flip a biased coin. If it shows heads, flip it again}. The outcome
is the number of heads: $0$, $1$ or $2$.

\begin{figure}[h]
\vspace{-0.45cm}
                \begin{center}
                        \includegraphics[scale=1]{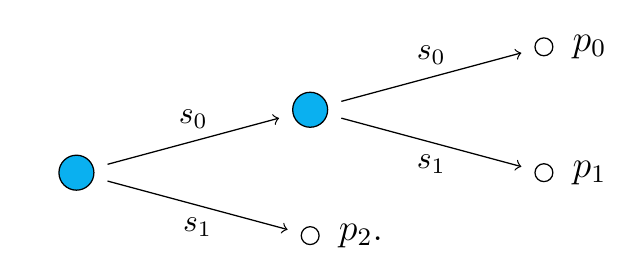}
                \end{center}
                \vspace{-0.7cm}
                \caption{
Probability tree that describes the coin toss model in Example \ref{ex:coins}.
                \label{fig:coins}}
        \end{figure}

If $s$ is the bias of the cone, then the model
is the parametric curve $X$ given by
   $$ \qquad (0,1) \to  X \subset \Delta_2\,, \,\,
  s \,\mapsto \, \bigr(\,s^2,s (1-s),1-s \,\bigr). $$
This model is the conic $   X = V(p_0p_2 - (p_0+p_1)p_1) \subset \PP^2$.
The MLE is given  by the formula
\begin{equation}      
\label{eq:rational2}
 (\hat p_0, \hat p_1, \hat p_2) \,=\,
\begin{small}                                                                              
\biggl(                                                                                    
\frac{(2u_0 + u_1)^2}{(2u_0 {+} 2u_1 {+} u_2)^2} \,,\,                                     
\frac{(2u_0 {+} u_1)(u_1{+}u_2)}{(2u_0 + 2u_1 + u_2)^2}\, ,\,                              
 \frac{u_1+u_2}{2u_0 {+} 2u_1 {+} u_2} \biggr) .                   
 \end{small}                                                                               
\end{equation}
Since the coordinates of $\hat p$ are rational functions,
the ML degree of $X$ is equal to one.
\end{example}

The following theorem explains what we saw in equations (\ref{eq:rational1})
and (\ref{eq:rational2}):

\begin{theorem}
If $X \subset \Delta_n$ is a model of ML degree one, so
 $\hat p$ is a rational function of $u$,
then each coordinate $\hat p_i$ is an alternating  product of linear forms with positive coefficients.
\end{theorem}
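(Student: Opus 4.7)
The plan is to apply Kapranov's theorem on Horn uniformization of $A$-discriminantal varieties, following Huh's characterization in \cite{Huh14}. First, I would invoke the scaling invariance $\ell_{tu}(p) = t \cdot \ell_u(p)$, valid for $t > 0$, which forces $\hat p(tu) = \hat p(u)$. Combined with the normalization $\sum_i \hat p_i = 1$, this implies that each coordinate $\hat p_i(u)$ is a rational function of degree zero in the data $u = (u_0,\ldots,u_n)$, so the MLE defines a rational map $\Phi : \PP^n \dashrightarrow X$.

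Second, I would read off the geometric meaning of the critical equations. After clearing the denominators $p_0,\ldots,p_n$, the equation $\mathrm{rank}(\mathcal{AJ}) \leq c+1$ says that $(u_0/p_0, \ldots, u_n/p_n)$ lies in the conormal space to $X \subset \PP^n$ at $p = \hat p(u)$. Rescaling each coordinate by $p_i$, this identifies the assignment $p \mapsto u$ with the \emph{logarithmic Gauss map} of $X$, composed with a coordinatewise rescaling by $p$. Because the ML degree is one, the map $\Phi$ is birational, so this logarithmic Gauss map itself is a birational map onto its image.

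Third, I would apply Kapranov's theorem, in the form used in \cite{Huh13, Huh14}: a subvariety $X \subset \PP^n$ whose logarithmic Gauss map is birational admits a \emph{Horn uniformization}. Concretely, there exist linear forms $\ell_1,\ldots,\ell_r$ in $u_0,\ldots,u_n$, constants $\lambda_0,\ldots,\lambda_n \in \CC^*$, and integers $h_{ij} \in \ZZ$ such that
\begin{equation*}
\hat p_i(u) \,=\, \lambda_i \prod_{j=1}^r \ell_j(u)^{h_{ij}}.
\end{equation*}
This already exhibits $\hat p_i$ as an alternating product of linear forms, once one knows the $\ell_j$ can be taken to have positive coefficients.

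Fourth, I would argue that the coefficients of each $\ell_j$ and each $\lambda_i$ can be taken positive. Because $X \cap \Delta_n$ is nonempty, there is an open cone of data $u \in \RR^{n+1}_{>0}$ on which $\hat p(u) \in \Delta_n$; in particular each $\hat p_i(u)$ is a positive real number on this open cone. Hence each factor $\ell_j(u)$ has constant nonzero sign on the positive orthant, and a linear form of constant sign on $\RR^{n+1}_{>0}$ must have all coefficients of one sign. Replacing $\ell_j$ by $-\ell_j$ when necessary and absorbing the sign into $\lambda_i$, each $\ell_j$ acquires nonnegative coefficients, and the positivity of $\hat p_i$ on positive data then forces $\lambda_i > 0$ as well. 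The main obstacle in this plan is the third step: invoking Kapranov's theorem to convert the birationality of the logarithmic Gauss map into the explicit monomial-in-linear-forms shape of $\hat p$. The first two steps are essentially unpacking of the critical equations, and the fourth is a positivity argument; the substantive content of the theorem is Huh's identification \cite{Huh14} of ML-degree-one statistical models with Kapranov's Horn uniformized varieties.
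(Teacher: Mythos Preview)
Your outline follows exactly the route the paper points to: the paper's ``proof'' is just a citation to Huh \cite{Huh14} and Duarte--Marigliano--Sturmfels \cite{DMS}, and the mechanism in both is precisely the Horn uniformization of $A$-discriminantal varieties that you invoke in step~3. Your steps~1--3 are a faithful sketch of that argument, and you are right that the substantive content sits in step~3.

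There is, however, a gap in step~4. From ``$\hat p_i(u)>0$ on an open cone in $\RR^{n+1}_{>0}$'' you conclude ``each factor $\ell_j$ has constant nonzero sign on the positive orthant.'' That inference fails as stated: a product such as $(u_0-u_1)^2$ is positive on a dense open cone while the factor $u_0-u_1$ changes sign. What you actually need is that $\hat p_i(u)$ is finite and strictly positive on the \emph{entire} open orthant $\RR^{n+1}_{>0}$, so that no $\ell_j$ can vanish anywhere on it; only then does the elementary observation about linear forms apply. Establishing this requires an extra argument (e.g.\ that for an ML-degree-one model the unique critical point lies in the open simplex for every positive data vector), and in \cite{DMS} the positivity is in fact obtained structurally: the linear forms in the Horn uniformization are read off from the Horn matrix and are shown to be nonnegative integer combinations of the $u_i$, rather than deduced a posteriori from the sign of $\hat p_i$. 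Either fill that gap by proving the MLE lands in $\Delta_n$ for all positive $u$, or follow the structural route in \cite{DMS}.
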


\begin{proof}[Proof and Discussion]
This was shown for very affine varieties in \cite{Huh14}. It
was adapted to statistical models in \cite{DMS}.
These articles offer precise statements via
Horn uniformization
for $A$-discriminants \cite{GKZ}, i.e.~hypersurfaces dual to  toric varieties.
See also \cite[Corollary 3.12]{HS}.
\end{proof}

This section concludes with a connection to
scattering amplitudes in particle physics that was
discovered recently in \cite{ST}.
We consider the \emph{CEGM model}, due to Cachazo and his collaborators \cite{CEGM, CUZ}.
The role of the data vector $u$ is played by the
Mandelstam invariants. This theory rests on
the space $X^o$ of $m$ labeled points in general position in $\PP^{k-1}$, up to projective
transformations. Consider the action of the 
torus $(\CC^*)^m$ on the  Grassmannian 
$\text{Gr}(k,m) \subset \PP^{\binom{m}{k}-1}$.
 Let $\text{Gr}(k,m)^o$ be the open Grassmannian where
all Pl\"ucker coordinates are nonzero. 
The CEGM model is the $(k-1)(m-k-1)$-dimensional manifold 
\begin{equation}
\label{eq:cegm}
 X^o \quad = \quad \text{Gr}(k,m)^o/(\CC^*)^m. 
 \end{equation}

\begin{proposition}
The variety $X^o$ is very affine, with coordinates given by the $k {\times} k$ minors~of
\setcounter{MaxMatrixCols}{20}
\begin{equation}
\label{eq:Mkm}
M_{k,m} \,\,=\,\,  \begin{small} \begin{bmatrix} 
0 \quad & \quad 0 \quad & \quad 0 \quad & \quad \dots \quad & \quad 0 \quad & \,\,(-1)^k \quad & \quad 1 \quad & \quad 1 \quad & \quad 1 \quad & \quad \dots \quad & \quad 1\\ 
0 \quad & \quad 0 \quad & \quad 0 \quad & \quad \dots \quad & \quad (-1)^{k-1}  & \,\,0 \quad & \quad 1 \quad & \quad x_{1,1} \quad & \quad x_{1,2} \quad & \quad \dots \quad & \quad x_{1,m-k-1} \\ 
\vdots \quad & \quad \vdots \quad & \quad \vdots \quad & \quad \Ddots \quad & \quad \vdots \quad & \,\, \vdots \quad & \quad \vdots \quad & \quad \vdots \quad & \quad \vdots \quad & \quad \ddots \quad & \quad \vdots\\ 
0 \quad & \,\, 0 \quad & \quad -1 \quad & \quad \dots \quad & \quad 0 \quad & \,\, 0 \quad & \quad 1 \quad & \quad x_{k-3,1} \quad & \quad x_{k-3,2} \quad & \quad \dots \quad & \quad x_{k-3,m-k-1}\\ 
0 \quad & \quad 1 \quad & \quad 0 \quad & \quad \dots \quad & \quad 0 \quad & \,\, 0 \quad & \quad 1 \quad & \quad x_{k-2,1} \quad & \quad x_{k-2,2} \quad & \quad \dots \quad & \quad x_{k-2,m-k-1}\\ 
-1 \quad & \quad 0 \quad & \quad 0 \quad & \quad \dots \quad & \quad 0 \quad & \,\, 0 \quad & \quad 1 \quad & \quad x_{k-1,1} \quad & \quad x_{k-1,2} \quad & \quad \dots \quad & \quad x_{k-1,m-k-1}\\
\end{bmatrix} \end{small}.
\end{equation}
To be precise, the coordinates on $X^o \subset (\CC^*)^{\binom{m}{k}}$ are the
non-constant minors $p_{i_1 i_2 \cdots i_k}$.
\end{proposition}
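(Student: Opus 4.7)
The plan is to realize $M_{k,m}$ as a canonical slice for the combined action of $\text{GL}(k)$ and $(\CC^*)^m$ on $k \times m$ matrices of rank $k$, and then to read off the $k \times k$ minors of that slice as an embedding of $X^o$ into an algebraic torus.

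First I would establish the slice property. A representative of a point in $\text{Gr}(k,m)^o$ is a full-rank matrix $A$ all of whose $k \times k$ minors are nonzero. Since the submatrix on columns $1,\ldots,k$ is invertible, left multiplication by a unique element of $\text{GL}(k)$ moves those columns into the prescribed anti-diagonal form with entries $(-1)^k,(-1)^{k-1},\ldots,-1$. This exhausts $\text{GL}(k)$, since the stabilizer of an invertible matrix under left multiplication is trivial. Column scalings $\lambda_1,\ldots,\lambda_k$ can then be absorbed by compensating diagonal $\text{GL}(k)$-elements that preserve the anti-diagonal; indeed, a direct computation shows $JDJ^{-1}$ is diagonal (with entries in reverse order) whenever $J$ is anti-diagonal and $D$ is diagonal. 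Using the remaining torus freedom together with these compensating elements, the $(k{+}1)$-st column---whose entries are nonzero by hypothesis---is normalized to $(1,\ldots,1)^T$, and then $\lambda_{k+2},\ldots,\lambda_m$ normalize the top entries of the last $m-k-1$ columns to $1$. A parameter count confirms completeness: $M_{k,m}$ has $k^2+m-1 = \dim\bigl(\text{GL}(k)\times(\CC^*)^m/\CC^*\bigr)$ prescribed entries and $(k-1)(m-k-1) = \dim X^o$ free entries $x_{i,j}$.

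Second, the $k \times k$ minors of $M_{k,m}$ are by definition the Pl\"ucker coordinates of the normalized representative. The $\binom{k+1}{k} = k+1$ minors supported on columns $1,\ldots,k+1$ evaluate to $\pm 1$, determined solely by the anti-diagonal signs and the all-ones column. Every other minor picks up at least one column from $\{k+2,\ldots,m\}$ and is therefore a nonconstant polynomial in the $x_{i,j}$; by the defining condition of $\text{Gr}(k,m)^o$ none of these vanishes on $X^o$. The full collection of minors thus defines a morphism $X^o \hookrightarrow (\CC^*)^{\binom{m}{k}}$, whose image is cut out by the Pl\"ucker relations together with the linear equations fixing the constant coordinates---a closed subvariety of an algebraic torus. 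Injectivity follows because each $x_{i,j}$ can be recovered (up to sign) as the minor obtained by replacing one column of the anti-diagonal block with the column containing $x_{i,j}$.

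The main subtlety will be verifying that the anti-diagonal sign pattern in $M_{k,m}$ leaves no residual discrete ambiguity after the combined $\text{GL}(k)\times(\CC^*)^m$ normalization. This reduces to checking that the joint stabilizer of the anti-diagonal block and the all-ones column consists only of the diagonal $\CC^*$ that already acts trivially on $X^o$---a short calculation using the diagonal-to-diagonal conjugation formula for anti-diagonal matrices recorded above.
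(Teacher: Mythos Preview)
The paper does not actually prove this proposition. It is stated without proof, and the only commentary is the sentence immediately following it, which attributes the specific anti-diagonal sign pattern to \cite[equation~(4)]{BAKFST} and records that with this choice each unknown $x_{i,j}$ is literally equal to some Pl\"ucker coordinate $p_{i_1 i_2 \cdots i_k}$. So there is nothing in the paper to compare your argument against beyond that remark.

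Your argument is a correct and fairly complete direct verification. The slice construction, the dimension count, the check that column $k{+}1$ and the top entries of the remaining columns are nonzero (via the appropriate $k\times k$ minors), and the stabilizer computation all go through as you outline. One small point: where you write that each $x_{i,j}$ is recovered ``(up to sign)'' as a particular minor, the paper's remark is sharper---the alternating signs $(-1)^k,(-1)^{k-1},\ldots,-1$ on the anti-diagonal are chosen precisely so that the sign is $+1$ and $x_{i,j}$ \emph{equals} a Pl\"ucker coordinate. This does not affect the injectivity claim you need, but it is worth tightening, since it is the only substantive content the paper adds beyond the bare statement.
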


Following \cite[equation (4)]{BAKFST}, the antidiagonal matrix in the
left $k \times k$ block of $M_{k,m}$ is chosen so that each unknown  $x_{i,j}$ is
precisely equal to $p_{i_1 i_2 \cdots i_k}$ for some
$i_1 < i_2 < \cdots < i_k$. The \emph{scattering potential} for the CEGM model is
the following multivalued function on $X^o$:
\begin{equation}
\label{eq:scatpot}
 \ell_u  \quad = \quad \sum_{i_1 i_2 \cdots i_k} u_{i_1 i_2 \cdots i_k} \cdot 
\text{log}(p_{i_1i_2 \cdots i_k}). 
\end{equation}
The critical point equations, known as \emph{scattering equations} \cite[equation (7)]{BAKFST}, are given by
\begin{equation}\label{eq:scattering} 
\qquad \frac{\partial \ell_u}{ \partial x_{i,j}} \,=\, 0 \qquad \text{ for }\, 1\leq i \leq k-1\, \,
\text{and} \,\, 1\leq j \leq m-k-1 .
\end{equation}
These are equations of rational functions.
Solving these equations is the agenda in \cite{CEGM, CUZ, ST}.
 
 \begin{corollary}
 The number of complex solutions to (\ref{eq:scattering}) is the ML
 degree of the CEGM model $X^o$. This number equals the signed Euler
 characteristic $(-1)^{(k-1)(m-k-1)} \cdot \chi( X^o)$.
 \end{corollary}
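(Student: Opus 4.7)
The plan is to recognize the scattering potential $\ell_u$ as a log-likelihood function on the very affine variety $X^o$, and then invoke Theorem \ref{thm:Euler}. Both equalities in the corollary flow from this identification, so the work is to verify the hypotheses of Theorem~\ref{thm:Euler} in the CEGM setting and to match the two systems of critical equations.

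First I would handle the equality between the number of solutions to (\ref{eq:scattering}) and the ML degree of $X^o$. By the preceding proposition, the entries $x_{i,j}$ of the matrix $M_{k,m}$ are among the Pl\"ucker coordinates $p_{i_1 \cdots i_k}$, and together they give algebraically independent coordinates on the quotient $\mathrm{Gr}(k,m)^o/(\CC^*)^m$; thus the map $X^o \hookrightarrow (\CC^*)^{\binom{m}{k}}$ is parameterized by the $(k-1)(m-k-1)$ unknowns $x_{i,j}$. Under this parameterization the restriction of $\,\sum u_I \log(p_I)\,$ becomes $\ell_u$ as written in (\ref{eq:scatpot}), so the scattering equations (\ref{eq:scattering}) are exactly the critical equations of a log-likelihood function on $X^o$ in local coordinates. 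For generic Mandelstam data $u$, the number of complex critical points is by definition the ML degree of $X^o$, matching the format of (\ref{eq:MLE}).

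Next I would apply Theorem~\ref{thm:Euler} to obtain the second equality. The variety $X^o$ is very affine because it embeds into the algebraic torus $(\CC^*)^{\binom{m}{k}}$ via its nonconstant Pl\"ucker coordinates. To invoke Theorem~\ref{thm:Euler} I need $X^o$ to be nonsingular. This follows because $\mathrm{Gr}(k,m)^o$ is smooth and the torus $(\CC^*)^m/\CC^*$ acts freely on it when all Pl\"ucker coordinates are nonzero, so the GIT quotient $X^o$ inherits smoothness. Its dimension is $k(m-k)-(m-1) = (k-1)(m-k-1)$, producing the stated sign $(-1)^{(k-1)(m-k-1)}$. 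Combining these, Theorem~\ref{thm:Euler} yields $\mathrm{MLdegree}(X^o) = (-1)^{(k-1)(m-k-1)} \chi(X^o)$, which closes the argument.

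The main obstacle is the reconciliation between the torus formulation used here and the simplex formulation in Theorem~\ref{thm:Euler}. The latter is phrased for projective models $X \subset \Delta_n$ with $X^o$ obtained by removing $\{p_0 \cdots p_n(\sum p_i) = 0\}$, whereas CEGM lives intrinsically in a torus with no distinguished hyperplane at infinity. One must appeal to the extension to arbitrary very affine varieties, established in Huh's general form \cite[Theorem 1]{Huh13}, where any log-likelihood-type function $\sum u_\alpha \log(z_\alpha)$ with generic weights on a smooth very affine variety has a number of critical points equal to $(-1)^{\dim} \chi$. Verifying that the $\ell_u$ of (\ref{eq:scatpot}) falls within the scope of this generalization, rather than the simplex-bound version stated as Theorem~\ref{thm:Euler}, is the subtle point; once this is in hand, the corollary is immediate.
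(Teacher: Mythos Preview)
Your proposal is correct and follows the same route the paper intends: the corollary is stated without proof precisely because it is meant to be an immediate consequence of the preceding Proposition (establishing $X^o$ as a smooth very affine variety via the Pl\"ucker embedding) together with Theorem~\ref{thm:Euler}. Your observation that the simplex formulation of Theorem~\ref{thm:Euler} does not literally cover the torus setting, and that one must invoke Huh's general result \cite[Theorem~1]{Huh13} for arbitrary smooth very affine varieties, is a genuine point the paper leaves implicit; it is exactly the reference supplied in the proof of Theorem~\ref{thm:Euler}, so no additional input is needed.
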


\begin{example}[$k=2,m=6$]
The very affine threefold $X^o$ is
embedded in $(\CC^*)^9$ via
$$ \begin{matrix} p_{24} = x_1,\,
p_{25} = x_2,\,
p_{26} = x_3,\,
p_{34} = x_1-1,\,
p_{35} = x_2-1,\, \\ 
p_{36} = x_3-1,\,
p_{45} = x_2-x_1,\,
p_{46} = x_3-x_1,\,
p_{56} = x_3-x_2 .\end{matrix}
$$
These nine coordinates on
$X^o \subset (\CC^*)^9$ are the non-constant $2 \times 2$ minors of our matrix
$$
M_{2,6} \,\,\, =\,\,\, \begin{bmatrix}
\, \,\,\,0 \quad &
 \quad 1 \quad & 
  \quad 1 \quad &
   \quad 1 \quad &
    \quad 1 \quad &
     \quad 1 \quad \\
    \,   -1 \quad &
       \quad 0 \quad &
        \quad 1 \quad &
 \quad x_1 \quad &        
 \quad x_2 \quad &        
 \quad x_3 \quad 
      \end{bmatrix}.
   $$
   The scattering potential is the analogue to the log-likelihood function in statistics:
$$
\ell_u \,\, = \,\,   
u_{24} \,\text{log}(p_{24}) +    
u_{25} \,\text{log}(p_{25}) + \cdots +
u_{56} \,\text{log}(p_{56}) .
$$
This function has six critical points in $X^o$.
Hence $ \text{MLdegree}(X^o) = -\chi(X^o)= 6$.
\end{example}

We now examine the
number of critical points of the scattering potential
(\ref{eq:scatpot}).

\begin{theorem} \label{thm:CEGM}
The known values of the ML degree for
the CEGM model (\ref{eq:cegm}) are as follows.
For $k=2$, the ML degree equals $(m-3)!$
for all $m \geq 4$.
For $k=3$, it equals
$2,26,1272,188112, 74570400$ 
for $m=5,6,7,8,9$, and for
 $k=4,m=8$ it equals $5211816$.
\end{theorem}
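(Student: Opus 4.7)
The plan is to exploit the Corollary preceding the theorem, which identifies the ML degree of the CEGM model with the number of complex solutions of the scattering equations (\ref{eq:scattering}), and also with the signed Euler characteristic $(-1)^{(k-1)(m-k-1)} \chi(X^o)$. This splits the statement into one closed-form assertion (the $k=2$ family) and a finite list of computational verifications (the cases with $k \geq 3$).

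For $k=2$, the quotient $\mathrm{Gr}(2,m)^o/(\mathbb{C}^*)^m$ is the classical moduli space $M_{0,m}$ of $m$ distinct labeled points on $\mathbb{P}^1$ up to projective automorphism. The plan is to use the forgetful fibration $M_{0,m+1} \to M_{0,m}$, whose fiber over a configuration $\{p_1,\ldots,p_m\}$ is $\mathbb{P}^1 \setminus \{p_1,\ldots,p_m\}$. By the multiplicativity of the Euler characteristic for fibrations, $\chi(M_{0,m+1}) = (2-m)\,\chi(M_{0,m})$. Starting from $\chi(M_{0,4}) = \chi(\mathbb{P}^1 \setminus \{0,1,\infty\}) = -1$, induction on $m$ gives $\chi(M_{0,m}) = (-1)^{m-3}(m-3)!$, and the Corollary converts this to $\mathrm{MLdegree}(X^o) = (m-3)!$.

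For $k=3$ with $m \in \{5,6,7,8,9\}$ and for $(k,m)=(4,8)$, no closed formula is available, so the plan is a homotopy-continuation count as in Example \ref{ex:simon}. The steps are: (i) write (\ref{eq:scattering}) explicitly in the $(k-1)(m-k-1)$ unknowns $x_{i,j}$ by substituting the Pl\"ucker coordinates of $M_{k,m}$ from (\ref{eq:Mkm}) into (\ref{eq:scatpot}); (ii) clear denominators and saturate by the product of the non-constant Pl\"ucker coordinates to strip off extraneous components coming from the boundary divisor $\{p_{i_1\cdots i_k}=0\}$; (iii) fix a generic rational choice of Mandelstam invariants $u_{i_1\cdots i_k}$ and solve the resulting zero-dimensional system by \texttt{HomotopyContinuation.jl}; (iv) apply the certification procedure of \cite{BRT} to prove that the tracked endpoints account for every complex critical point. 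This yields the integers $2, 26, 1272, 188112, 74570400$ for $k=3$ and $5211816$ for $(k,m)=(4,8)$, matching \cite{CEGM, CUZ, ST}.

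The main obstacle is the sheer scale of the two largest cases: with $74570400$ paths for $(3,9)$ and $5211816$ for $(4,8)$, a naive run is out of reach of the timings reported in Example \ref{ex:simon}. To overcome this I would exploit the $S_m$--action on $X^o$ coming from relabeling the $m$ points. Choosing data $u$ invariant under a suitable subgroup $G \subset S_m$ reduces the solving problem to $G$--orbits of critical points, which cuts the number of tracked paths by $|G|$; a parameter homotopy then deforms from this symmetric start system to generic $u$. The completeness of the count rests on comparing the certified critical count at generic $u$ to the Euler characteristic of $X^o$, which can be recomputed independently from the class of $N_{X^o}$ in a product of projective spaces (as in Section~\ref{sec2}) or from the Chern--Schwartz--MacPherson class invoked in the proof of Theorem \ref{thm:Euler}.
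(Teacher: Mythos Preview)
The paper's own proof is a one-line citation to \cite{BAKFST}; it does not reproduce any argument. Your $k=2$ paragraph is correct and is exactly the computation carried out in \cite[Example~2.2]{BAKFST}: the forgetful map $M_{0,m+1}\to M_{0,m}$ is a fibration with fiber an $m$-punctured $\PP^1$, and multiplicativity of $\chi$ gives $(-1)^{m-3}(m-3)!$.

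For $k\ge 3$ your plan has a genuine gap in the completeness step. The certification of \cite{BRT} proves that each numerically tracked endpoint is a true isolated solution; it does \emph{not} prove that no solution was missed. Your step~(iv) therefore cannot ``prove that the tracked endpoints account for every complex critical point.'' You try to close this by computing $\chi(X^o)$ independently, but the two mechanisms you name do not do that job: the conormal class $[N_X]$ and the polar degrees of Section~\ref{sec2} compute the ED degree, not the topological Euler characteristic of a very affine variety; and invoking the Chern--Schwartz--MacPherson class is simply restating Theorem~\ref{thm:Euler} rather than giving an independent, feasible calculation for spaces of this size. Without an a~priori upper bound matching the certified lower bound, the numbers $188112$, $74570400$, $5211816$ remain unproved by your scheme.

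The actual arguments in \cite{BAKFST} (the title is ``Likelihood degenerations'') use, for the large cases, degeneration and soft-limit techniques together with monodromy-based numerical solving, rather than a brute-force polyhedral homotopy plus interval certification. Your symmetry idea is in the right spirit, but note that taking $u$ to be $G$-invariant makes $u$ non-generic, so the number of critical points can drop; one must argue separately that the subsequent parameter homotopy to generic $u$ recovers the full ML degree and that no paths diverge or coalesce along the way.
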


\begin{proof}
We refer to \cite[Example 2.2]{BAKFST},
\cite[Theorem 5.1]{BAKFST} and
\cite[Theorem 6.1]{BAKFST} for $k=2,3,4$.
\end{proof}

Knowing these ML degrees helps in solving
the scattering equations reliably.  We demonstrated
in \cite{BAKFST, ST} how this can be done in practice with
\texttt{HomotopyContinuation.jl} \cite{HCjl, BRT}.
For instance, we see in \cite[Table 1]{ST} that
the $10! = 3628800$ solutions for $k=2,m=13$
are found in under one hour.
See \cite[Section 6]{BAKFST} for the solution in
the challenging case $k=4,m=8$.

\section{Nonlinear Algebra meets Linear PDE}
\label{sec4}

In his 1938 article on foundations of algebraic geometry,
Wolfgang Gr\"obner introduced differential operators to characterize
membership in a polynomial ideal. He solved this for zero-dimensional
ideals using Macaulay's inverse systems \cite{Gro}. Gr\"obner wanted this for
all ideals, ideally with algorithmic methods. This was finally achieved
in the article~\cite{CS}.

Analysts made substantial contributions to this subject.
In the 1960's, Leon Ehrenpreis and Victor Palamodov studied solutions
to linear partial differential equations (PDE) with constant coefficients.
A main step was the characterization of membership in a primary
ideal by Noetherian operators. This led to their celebrated {\em Fundamental Principle}.
That result is presented in Theorem \ref{thm:EP}.
For background reading see \cite{AHS, CHS, CS}
and their references.

\begin{example}[$n=3$] We give an illustration by
exploring a progression of four questions.

\noindent \emph{Question 1}:
What are the solutions to the system of equations
$\,x_1^2 = x_2^2 = x_1 x_3 - x_2 x_3^2 = 0\,$?

\noindent \emph{Question 2}: Determine all functions
$\phi(z_1,z_2,z_3)$ that satisfy the following three linear PDE:
$$
 \frac{\partial^2 \phi}{\partial z_1^2} \,=\,
 \frac{\partial^2 \phi}{\partial z_2^2} \,=\,
  \frac{\partial^2 \phi}{\partial z_1 \partial z_3} -
 \frac{\partial^3 \phi}{\partial z_2 \partial z_3^2} \,=\, 0.
$$

\noindent \emph{Question 3}: Which polynomials lie in the ideal
\begin{equation}
\label{eq:I222}
 I \,\, = \,\, \langle x_1^2, x_2^2, x_1 - x_2 x_3 \rangle\,\, \cap \,\,
\langle   x_1^2, x_2^2, x_3 \rangle \, ? \end{equation}

\noindent \emph{Question 4}:
Describe the geometry of
the subscheme $V(I)$ of affine $3$-space given by (\ref{eq:I222}).

Here are our answers to these four questions. Notice how they are intertwined:

\noindent \emph{Answer 1}:
Assuming that $x_i^2 = 0$ implies $x_i = 0$, the equations are equivalent to
 $x_1 = x_2 = 0$. Their solution set is a line through the origin in $3$-space, namely
 the $x_3$-axis.

\noindent \emph{Answer 2}:
The solutions to these PDE are precisely the functions $\phi(z)$ that have the form
\begin{equation}
\label{eq:havetheform}
 \phi(z_1,z_2,z_3) \,\, = \,\, \xi(z_3) \, + \, \bigl(z_2 \psi(z_3) + z_1 \psi'(z_3) \bigr) \,+\,
\alpha z_1 z_2 \,+ \, \beta z_1, 
\end{equation}
where $\alpha,\beta$ are constants, and
$\xi$ and $\psi$ are differentiable functions in one variable.

\noindent \emph{Answer 3}:
A polynomial $f$ is in the ideal $I$ if and only if the following four conditions hold:
Both $f$ and $\frac{\partial f}{\partial x_2} + x_3 \frac{\partial f}{\partial x_1}$ vanish
on the $x_3$-axis, and both $\frac{\partial^2 f}{\partial x_1 x_2}$ and
$\frac{\partial f}{\partial x_1}$ vanish at the origin.

\noindent \emph{Answer 4}: This scheme is a double $x_3$-axis together with
an embedded point of length two at the origin.
Hence $I$ has arithmetic multiplicity four:
two for the line and two for the point.

Answer 4 reveals the multiplicity structure on the naive solution set in Answer 1.
This is characterized by four features, one for each differential condition
in Answer 3. These are in natural bijection with the four summands
of the general solution (\ref{eq:havetheform}) in Answer 2.
\end{example}

We now turn to ideals $I$ in the polynomial ring $\CC[x] = \CC[x_1,\ldots,x_n]$.
We identify the $n$ variables with differential operators $x_i = \partial_{z_i} $
that act on functions $\phi(z) = \phi(z_1,\ldots,z_n)$.
In this manner, each $I$ is a system of linear homogeneous PDE
 with constant coefficients.
This role of polynomials
 is the topic of Section 3.3 in the textbook~\cite{MS}.
The story begins in \cite[Lemma 3.25]{MS} with the following
 encoding of the variety $V(I)$ in the solutions to the PDE.

\begin{lemma}
A point $a \in \CC^n$ lies in the variety $V(I)$ if and only if the
exponential function $\text{exp}(a \cdot z) = \text{exp}(a_1 z_1 + \cdots + a_n z_n)$
is a solution to the system of linear PDE given by $I$.
\end{lemma}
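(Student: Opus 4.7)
The plan is to reduce everything to the single identity $f(\partial_z) \exp(a \cdot z) = f(a) \exp(a \cdot z)$, valid for every polynomial $f \in \CC[x_1,\ldots,x_n]$ and every $a \in \CC^n$. Once this identity is in hand, both directions of the equivalence follow immediately from the fact that the exponential function $\exp(a \cdot z)$ is nowhere zero on $\CC^n$.

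First I would verify the identity on monomials. Writing $x^\alpha = x_1^{\alpha_1} \cdots x_n^{\alpha_n}$, one checks by induction on $|\alpha|$ that
\begin{equation*}
\frac{\partial^{|\alpha|}}{\partial z_1^{\alpha_1} \cdots \partial z_n^{\alpha_n}} \exp(a \cdot z) \,\,=\,\, a_1^{\alpha_1} \cdots a_n^{\alpha_n} \exp(a \cdot z) \,\,=\,\, a^\alpha \exp(a \cdot z),
\end{equation*}
because $\exp(a \cdot z)$ is a joint eigenfunction of the commuting operators $\partial_{z_1},\ldots,\partial_{z_n}$ with eigenvalues $a_1,\ldots,a_n$ respectively. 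Extending by $\CC$-linearity, for any $f = \sum_\alpha c_\alpha x^\alpha \in \CC[x]$ we obtain $f(\partial_z) \exp(a \cdot z) = \bigl( \sum_\alpha c_\alpha a^\alpha \bigr) \exp(a \cdot z) = f(a) \exp(a \cdot z)$.

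With this identity established, both implications become transparent. Since $\exp(a \cdot z) \neq 0$ everywhere, the PDE $f(\partial_z)\phi = 0$ is satisfied by $\phi = \exp(a \cdot z)$ if and only if $f(a) = 0$. Therefore $\exp(a \cdot z)$ is a solution of the entire system of PDE given by $I$ if and only if $f(a) = 0$ for every $f \in I$, which is precisely the condition $a \in V(I)$. It suffices in practice to check this on a finite generating set $f_1,\ldots,f_k$ of $I$, since the map $f \mapsto f(\partial_z)$ is a ring homomorphism from $\CC[x]$ to the Weyl algebra of constant-coefficient differential operators.

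There is no real obstacle here; the content is the commutativity of the eigenvalue computation. The one point worth emphasizing is the nonvanishing of $\exp(a \cdot z)$, which is what allows us to divide out and conclude $f(a) = 0$ from $f(a) \exp(a \cdot z) = 0$. This small observation is what converts the algebraic evaluation $f \mapsto f(a)$ into the analytic statement that $\exp(a \cdot z)$ solves the PDE.
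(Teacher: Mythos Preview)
Your proof is correct. The paper does not actually prove this lemma in the text; it merely attributes the statement to \cite[Lemma 3.25]{MS}. Your argument---reducing everything to the eigenfunction identity $f(\partial_z)\exp(a\cdot z)=f(a)\exp(a\cdot z)$ and then invoking the nonvanishing of the exponential---is the standard one and is essentially what one finds in that reference.
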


Since our PDE are linear, their solution sets are linear spaces.
Arbitrary $\CC$-linear combinations of solutions are again solutions.
The following proposition makes this precise.

\begin{proposition} \label{prop:fourthree}
Given any measure $\mu$ on the variety $V(I)$, here is a solution to our PDE:
\begin{equation}
\label{eq:expintegral}
\phi(z) \,\, = \,\, \int_{V(I)} \text{exp}( a \cdot z)\, d\mu(a).
\end{equation}
If $I$ is a prime ideal then every solution to the PDE admits such an integral representation.
\end{proposition}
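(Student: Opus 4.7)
The plan is to handle the two assertions separately, with the converse being by far the deeper statement. For the forward direction, the key identity is that exponentials are joint eigenfunctions of all constant-coefficient differential operators: if $f \in \CC[x]$ then $f(\partial_z) \exp(a \cdot z) = f(a) \exp(a \cdot z)$, and the right-hand side vanishes whenever $a \in V(I)$ and $f \in I$. Granted enough regularity of $\mu$ (e.g.\ a compactly supported finite Radon measure) to permit differentiation under the integral, this immediately gives $f(\partial_z)\phi = 0$ for every $f \in I$. I would state the needed regularity hypothesis on $\mu$ up front rather than leave it implicit in the word ``measure''.

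For the converse, when $I$ is prime, the plan is to invoke the Ehrenpreis--Palamodov Fundamental Principle, which is the central analytic result underpinning this circle of ideas and is the same theorem the paper promises in the following pages. I would proceed in four steps: (i) fix a natural class of solutions, say smooth functions on a convex open $\Omega \subset \RR^n$ (or a suitable space of distributions); (ii) apply the Fourier--Laplace transform to convert the PDE system $\{f(\partial_z)\phi = 0 : f \in I\}$ into the condition that the transform $\hat\phi$ is annihilated, via division theorems of Malgrange--H\"ormander type, by every element of $I$; (iii) conclude that $\hat\phi$ is supported on $V(I)$ in the appropriate holomorphic-functional sense; and (iv) realise that supported functional as an integral against a measure on $V(I)$, producing the representation (\ref{eq:expintegral}).

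The reason primeness of $I$ matters is exactly where the general Ehrenpreis--Palamodov representation specialises: for each primary component of a general ideal one must weight the exponential $\exp(a \cdot z)$ by a Noetherian differential operator, but for a single prime component that operator may be chosen to be the constant $1$. Hence no polynomial prefactors appear in the integrand and the representation collapses to a plain integral of exponentials. For a non-prime ideal one obtains the richer representation, which is precisely what the differential primary decompositions of \cite{AHS, CHS, CS} are designed to track.

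The main obstacle is step (iv), together with the topological setup in step (i). The Ehrenpreis--Palamodov theorem is deep: it rests on the Oka--Cartan theory of coherent analytic sheaves and on Paley--Wiener type estimates, and I would quote rather than reprove it, citing Palamodov's monograph along with \cite{CHS, CS}. The subtle analytic content lies in matching a topology on the space of solutions $\phi$ with a class of measures $\mu$ on $V(I)$ for which the integral (\ref{eq:expintegral}) both converges and realises the full solution space; any precise version of the proposition must commit to such a pairing of topologies.
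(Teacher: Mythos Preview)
Your proposal is correct and matches the paper's approach: the forward direction is dispatched as straightforward (the paper gives even less detail than you do), and the converse is obtained as the special case of the Ehrenpreis--Palamodov Fundamental Principle in which the primary ideal is prime and hence the Noetherian multiplier reduces to the constant $1$. The paper does not attempt your four-step sketch of the analytic machinery behind Ehrenpreis--Palamodov but simply cites it, exactly as you propose to do.
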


The first part of Proposition \ref{prop:fourthree} is straightforward.
Recall that an ideal $Q$ is \emph{primary} if it has only one associated prime~$P$.
The second part is a special~case of the following result.

\begin{theorem}[Ehrenpreis-Palamodov]
\label{thm:EP}
Fix a prime ideal $P$ in $\CC[x]$. For any $P$-primary ideal $Q$ in $\CC[x]$, there
exist polynomials $B_1,\ldots,B_m\,$ in $\,2n$ unknowns such that
the function
\begin{equation}
\label{eq:PDintegral}
\phi(z) \quad = \quad \sum_{i=1}^m \int_{V(P)} \!\! B_i(x,z)\, \text{exp}(x \cdot z) \,d \mu_i(x)
\end{equation}
is a solution to the PDE given by $Q$, for any measures $\mu_1,\ldots,\mu_m$ on the variety $V(P)$. 
Conversely, every solution
$\phi(z)$ of the PDE given by $Q$ admits such an integral representation.
\end{theorem}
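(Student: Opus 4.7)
The plan is to reduce to the primary case (already built into the statement), handle the direct implication by a clean conjugation identity, and treat the converse via Fourier--Laplace duality, using the algebraic existence of Noetherian operators as the bridge between the two directions.

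\textbf{Step 1: Produce the Noetherian operators.} First I would establish the purely algebraic input, namely that for each $P$-primary ideal $Q \subset \CC[x]$ there exist differential operators $B_1(x,\partial_x), \ldots, B_m(x,\partial_x)$ such that, for every $f \in \CC[x]$,
\[ f \in Q \;\iff\; B_i(x,\partial_x)f \equiv 0 \text{ on } V(P) \text{ for all } i. \]
This comes from the fact that $\CC[x]_P / Q_P$ is a finite-length module over the local ring $\CC[x]_P$ with residue field $\kappa(P) = \CC(V(P))$; a $\kappa(P)$-basis of its dual can be lifted to differential operators over the generic point and then cleared of denominators to give the $B_i$. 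Writing $B_i = \sum_\alpha c_{i,\alpha}(x)\,\partial_x^\alpha$, I then define the polynomial appearing in (\ref{eq:PDintegral}) by $B_i(x,z) := \sum_\alpha c_{i,\alpha}(x)\, z^\alpha$.

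\textbf{Step 2: Forward direction.} Next I would verify that (\ref{eq:PDintegral}) solves the PDE for arbitrary measures $\mu_i$ (with, say, compact support, to justify differentiating under the integral sign). The key identity is
\[ g(\partial_z)\bigl[B_i(x,z)\exp(x\cdot z)\bigr] \;=\; \exp(x\cdot z)\cdot \bigl(g(x + \partial_z)B_i\bigr)(x,z), \]
a direct consequence of Leibniz and of $\partial_{z_j}\exp(x\cdot z) = x_j\exp(x\cdot z)$. Expanding $g(x+\partial_z)B_i(x,z)$ in powers of $\partial_z$ acting on $B_i(x,\cdot)$, one sees that this polynomial in $(x,z)$ is precisely the list of evaluations on $V(P)$ covered by Step~1 applied to $g \in Q$. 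Hence the right side vanishes for $x \in V(P)$, and integration against $\mu_i$ produces zero.

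\textbf{Step 3: Converse via Fourier--Laplace.} This is where the work lies. For a tempered distribution solution $\phi(z)$ of the PDE system, I would pass to the Fourier--Laplace transform $\widehat{\phi}$; the equations $g(\partial_z)\phi = 0$ for $g \in Q$ become $g(x)\widehat{\phi} = 0$, so $\widehat{\phi}$ is annihilated by $Q$ as a module, and its support lies in $V(P)$ with transverse structure governed by $\CC[x]_P / Q_P$. Using the operators $B_1,\ldots,B_m$ as a basis of the dual of this Artinian module, one disintegrates $\widehat{\phi}$ as $\sum_i B_i(x,\partial_x)^* \mu_i$ for measures $\mu_i$ on $V(P)$, and inverting the transform yields (\ref{eq:PDintegral}). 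For holomorphic or entire-exponential-type classes of $\phi$, the necessary estimates come from the Paley--Wiener--Schwartz theorem together with a H\"ormander-style division theorem that splits an entire function annihilated by $Q$ into pieces corresponding to the Noetherian basis.

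\textbf{Main obstacle.} The principal difficulty is making the disintegration in Step~3 rigorous: one must produce the measures $\mu_i$ in an appropriate function-analytic class and show that the representation in terms of $B_i$ is both well-defined and exhaustive. This is the analytic heart of Ehrenpreis--Palamodov, originally requiring delicate $L^2$-bounds and division theorems for entire functions of exponential type. The modern viewpoint of \cite{AHS, CS, CHS} shifts the burden onto Step~1 by giving an algorithmic construction of the $B_i$, after which Step~3 becomes a largely formal exercise in duality for holonomic-type modules.
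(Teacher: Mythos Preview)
The paper does not actually prove Theorem~\ref{thm:EP}; its ``proof'' is a one-line citation to \cite[Theorem~3.3]{CHS} and the analysis literature referenced there. So there is no approach in the paper to compare against. Your sketch is in fact the standard architecture of the Ehrenpreis--Palamodov argument (algebraic construction of Noetherian operators, conjugation identity for the forward direction, Fourier--Laplace duality plus division with bounds for the converse), and it is more than the paper itself offers.

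Two comments on content. In Step~2, the passage from ``$B_i(x,\partial_x)g$ vanishes on $V(P)$'' to ``$g(x+\partial_z)B_i(x,z)$ vanishes for $x\in V(P)$'' is not a tautology; it is a transposition identity between Noetherian operators and Noetherian multipliers that needs to be stated and checked (this is done explicitly in \cite[\S3--4]{CHS} and \cite[\S4]{AHS}). Your phrase ``one sees that\ldots'' hides exactly this step. In your final paragraph, the claim that the modern algebraic viewpoint makes Step~3 ``a largely formal exercise'' overstates matters: the papers \cite{AHS,CS,CHS} make Step~1 algorithmic and clarify the algebra, but the analytic converse---producing the measures $\mu_i$ in a prescribed function class---still rests on the classical division-with-bounds machinery of Ehrenpreis, Malgrange, H\"ormander, and Palamodov. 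That part is not shortened by knowing the $B_i$ explicitly.
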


\begin{proof} See \cite[Theorem 3.3]{CHS}
and the pointers to the analysis literature given there.
\end{proof}

The polynomials $B_1(x,z),\ldots,B_m(x,z)$ are known as \emph{Noetherian multipliers}.
They depend only on the primary ideal $Q$, and not on the function $\phi(z)$.
They encode the scheme structure imposed by $Q$ on the irreducible variety $V(P)$.
The Noetherian multipliers furnish a finite representation 
 of a vector space that is usually infinite-dimensional,
namely the space of all solutions to the PDE, within a 
suitable class of scalar-valued functions on $n$-space.

\begin{example}$[n=3]$  \label{ex:Q3a}
\, Let $Q = \langle x_1^2, x_2^2, x_1 - x_2 x_3 \rangle$ be the 
first primary ideal in (\ref{eq:I222}). Here $m=2$,
$B_1 = 1$, and $B_2 = x_3 z_1 + z_2$.
Solutions to $Q$ are given by the two summands in (\ref{eq:PDintegral}):
$$ \phi_1(z) \,\,=\,\, \int  1 \cdot \,\text{exp}(0 z_1+ 0 z_2+x_3 z_3) \,d\mu_1(x)\,\, =\,\, \xi(z_3)  \qquad \qquad \quad \text{and} \qquad $$
$$ \begin{matrix}
 \phi_2(z) \phantom{do}
&=&  \int ( z_2 + z_1 x_3 )\cdot  \text{exp}(0 z_1+0 z_2+x_3 z_3) \,d\mu_2(x) \qquad \qquad \qquad \\
&=&  \quad z_2   \int \! \text{exp}( 0 z_1{+} 0 z_2{+} x_3 z_3) d\mu_2(x)
    + z_1 \!  \int \! x_3 \,\text{exp}( 0 z_1{+} 0 z_2{+}x_3 z_3) d\mu_2(x) \\
&=&    z_2 \, \psi(z_3) \,+\, z_1\, \psi'(z_3) . \quad
\end{matrix}
$$
We conclude that our solution $\phi_1(z) + \phi_2(z)$ agrees with the first two summands in
(\ref{eq:havetheform}).
\end{example}

Switching the roles of $x$ and $z$, we now set
$z_1 = \partial_{x_1},\ldots,z_n = \partial_{x_n}$ in  the Noetherian multipliers.
Here it is important that the $x$-variables
occur to the left of the $z$-variables in the monomial expansion of each
$B_i(x,z)$.
This results in the \emph{Noetherian operators}
$ B_i(x,\partial_x)$. These operators
are elements in the Weyl algebra and they act on
polynomials in $\CC[x]$.
We use $\bullet$ to denote the action
of differential operators on polynomials and other functions.

\begin{proposition} 
The Noetherian operators determine membership in the primary ideal $Q$.
Namely, a polynomial $f(x)$ lies in $Q$
if and only if $B_i(x,\partial_x) \bullet f(x)$ lies in $P$
for $i=1,\ldots,m$.
\end{proposition}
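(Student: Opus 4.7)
The plan is to deduce this characterization from the Ehrenpreis--Palamodov Fundamental Principle (Theorem~\ref{thm:EP}) by pairing polynomials $f(x)$, viewed as constant-coefficient differential operators $f(\partial_z)$, against elements of the solution space $\mathrm{Sol}(Q)$ of the PDE system encoded by $Q$. The computational bridge is the exponential-shift identity
$$ f(\partial_z) \bullet \bigl(B(x,z)\exp(x \cdot z)\bigr) \,=\, \bigl[f(\partial_z + x) \bullet B(x,z)\bigr] \exp(x \cdot z), $$
valid for any polynomials $f \in \CC[x]$ and $B \in \CC[x,z]$, obtained by iterating the Leibniz rule $\partial_{z_j}(B\exp(x \cdot z)) = [(\partial_{z_j} + x_j)B]\exp(x \cdot z)$. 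Taylor-expanding $f(\partial_z + x) = \sum_\alpha \tfrac{1}{\alpha!}(\partial_x^\alpha f)(x)\,\partial_z^\alpha$ and setting $z = 0$ then yields the central duality
$$ \bigl[f(\partial_z) \bullet \bigl(B(x,z)\exp(x \cdot z)\bigr)\bigr]\big|_{z = 0} \,=\, B(x, \partial_x) \bullet f(x). $$

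For the forward direction, suppose $f \in Q$. By Theorem~\ref{thm:EP} with $\mu_i = \delta_a$ (and other measures zero), each elementary function $\phi_{i,a}(z) = B_i(a,z)\exp(a \cdot z)$ with $a \in V(P)$ lies in $\mathrm{Sol}(Q)$, and hence is annihilated by $f(\partial_z)$. Evaluating this vanishing at $z = 0$ and invoking the central duality shows $[B_i(x,\partial_x) \bullet f](a) = 0$ for every $a \in V(P)$ and every $i$. Since $P$ is prime, hence radical, and $\CC$ is algebraically closed, Hilbert's Nullstellensatz gives $B_i(x,\partial_x) \bullet f \in I(V(P)) = P$, as desired.

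For the converse, suppose $B_i(x,\partial_x) \bullet f \in P$ for each $i$. The plan is to show $f(\partial_z) \bullet \phi \equiv 0$ for every $\phi \in \mathrm{Sol}(Q)$, whence $f \in Q$ via the dual half of the Fundamental Principle, namely $Q = \mathrm{Ann}_{\CC[\partial_z]}(\mathrm{Sol}(Q))$. Given $\phi \in \mathrm{Sol}(Q)$ and any point $z_0 \in \CC^n$, the translate $\psi(z) := \phi(z + z_0)$ again lies in $\mathrm{Sol}(Q)$ by constant-coefficient shift-invariance, so Theorem~\ref{thm:EP} furnishes an integral representation $\psi(z) = \sum_i \int_{V(P)} B_i(x,z)\exp(x \cdot z)\,d\nu_i(x)$ for suitable measures $\nu_i$. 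By linearity and the central duality,
$$ (f(\partial_z) \bullet \phi)(z_0) \,=\, (f(\partial_z) \bullet \psi)(0) \,=\, \sum_{i=1}^m \int_{V(P)} \bigl[B_i(x,\partial_x) \bullet f\bigr](x)\,d\nu_i(x), $$
which vanishes because each integrand lies in $P$ and is integrated over $V(P)$. Since $z_0$ was arbitrary, $f(\partial_z) \bullet \phi \equiv 0$.

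The main obstacle is the implicit appeal, in the converse direction, to the assertion $Q = \mathrm{Ann}_{\CC[\partial_z]}(\mathrm{Sol}(Q))$ -- the sharper second half of the Fundamental Principle. Without it, the argument shows only that $f$ annihilates the solution space, not that $f$ lies in $Q$. This ``only if'' statement is in fact logically equivalent to the converse direction of the proposition, and its justification rests on the structure theory underlying the Noetherian-operator construction in the cited references \cite{CHS, AHS, CS}: the family $\{B_i(x,\partial_x)\}_{i=1}^m$ forms a basis, modulo $P$, of the Macaulay inverse system of the primary localization $Q \cdot \CC[x]_P$, which detects membership in $Q$.
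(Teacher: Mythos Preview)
The paper's ``proof'' is a bare citation to \cite[Proposition 4.8]{AHS} and \cite[Theorems 3.2 and 3.3]{CHS}, so your attempt is necessarily different in form. Your forward direction is a clean, self-contained argument that the paper does not supply: the exponential-shift identity and the resulting duality $[f(\partial_z)\bullet(B(a,z)e^{a\cdot z})]|_{z=0}=[B(x,\partial_x)\bullet f](a)$ are correct, and combined with the easy half of Theorem~\ref{thm:EP} (Dirac measures give solutions) plus the Nullstellensatz, they yield $B_i(x,\partial_x)\bullet f\in P$ whenever $f\in Q$. This is genuinely more informative than a citation.

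For the converse, you correctly diagnose your own gap: the appeal to $Q=\mathrm{Ann}_{\CC[\partial_z]}(\mathrm{Sol}(Q))$ is, in the presence of the forward direction and Theorem~\ref{thm:EP}, logically equivalent to the converse you are trying to prove, so the argument is circular. Your closing remark---that the actual content lies in the algebraic fact that the $B_i(x,\partial_x)$ form a basis, modulo $P$, of the inverse system of the localized primary ideal---is exactly where the paper's citations point. In other words, your converse does not advance beyond the paper; it reduces to the same references, only after a longer detour through analysis. A minor additional caution: the interchange of $f(\partial_z)$ with the integral in the converse step presumes regularity of the Ehrenpreis--Palamodov measures that is not automatic and would itself need justification from those references.
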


\begin{proof}
This is the content of \cite[Proposition 4.8]{AHS}.
See also \cite[Theorems 3.2 and 3.3]{CHS}.
\end{proof}

\begin{example} \label{ex:Q3b}
From $B_1$ and $B_2$ in Example \ref{ex:Q3a},
we obtain the Noetherian operators
$1$ and $ x_3 \partial_{x_1} {+} \partial_{x_2}$.
A polynomial $f$ lies in $Q$ if and only if
 $f$ and $( x_3 \partial_{x_1} {+} \partial_{x_2}) \bullet f$
are  in $P=\langle x_1,x_2 \rangle$.
\end{example}

We have seen that Noetherian multipliers and Noetherian
operators are two sides of the same coin. While the latter
characterize the membership in a primary ideal, 
as envisioned by Gr\"obner \cite{Gro}, the former
furnish the general solution to the associated PDE.
A next step is the extension from primary to arbitrary
ideals in  the polynomial ring $R=\CC[x]$.
To be more general,  we consider
an arbitrary submodule $M$ of
the free module $R^k$. Such a submodule
represents a system of linear PDE as before, but for 
vector-valued functions $\phi : \CC^n \rightarrow \CC^k$.

For a vector $m \in R^k$, the quotient
$(M:m)$ is the ideal $\{f \in R: fm \in M\}$.
 A prime ideal $P_i \subseteq R$ is
{\em associated to} the module $M$ 
if $(M:m) = P_i$ for some $m \in R^k$. The list of all
associated primes of $M$ is finite, say $P_1,\ldots,P_s$.
If $s=1$ then $M$ is $P_1$-primary.
A {\em primary decomposition} of $M$ is a list of primary submodules
$M_1,\ldots,M_s \subseteq R^k$
where $M_i$ is $P_i$-primary and
$\, M  =  M_1 \cap M_2 \cap \cdots \cap M_s $.
The contribution of the primary module $M_i$ to $M$
is quantified by a positive integer $m_i$, called
  the  arithmetic length of $M$ along $P_i$.
To define this, we consider the localization
$(R_{P_i})^k/M_{P_i}$. This is a module over
the local ring $R_{P_i}$. The {\em arithmetic length}
is the length of the largest submodule of finite length in
$(R_{P_i})^k/M_{P_i}$.
The sum $m_1 + \cdots + m_s$ is  denoted $\text{amult}(M)$
and called the {\em arithmetic multiplicity} of $M$.

\begin{example}[$n=3,k=1$]
The ideal $I$ in (\ref{eq:I222}) has
arithmetic multiplicity $4$.
The arithmetic length is $m_1 = m_2 = 2$
along each of the
associated primes
$P_1 = \langle x_1,x_2\rangle$ and 
$P_2 = \langle x_1,x_2,x_3 \rangle$.
\end{example}

We now present an extension of 
Theorem \ref{thm:EP} to PDE for
vector-valued functions.
Let $V_i = V(P_i) \subset \CC^n$ be the irreducible
variety defined by the $i$th associated prime $P_i$ of  $M$.

\begin{theorem}[Ehrenpreis-Palamodov for modules]
\label{thm:EP2}
For any submodule $M \subset R^k$, there exist
$\,\text{amult}(M) = \sum_{i=1}^s m_i \,$ Noetherian multipliers:
these are vectors $B_{ij} \in \CC[x,z]^k$ such that 
  \begin{equation}
\label{eq:anysolution}
	\phi(z) \,\,\,= \,\,\, \sum_{i=1}^s \sum_{j=1}^{m_i} \,\int_{V_i} \!\! B_{ij} \! 
	\left(x,z\right)
	\exp\left( x \cdot z \right) d\mu_{ij} (x)
\end{equation}
is a solution to the PDE given by $M$.
Here $\mu_{ij}$ are measures that are supported on the  variety~$V_i $.
Conversely, every solution to that PDE admits such an integral representation.
\end{theorem}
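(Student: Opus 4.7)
The plan is to reduce Theorem \ref{thm:EP2} to the primary case already established in Theorem \ref{thm:EP} in two stages: first extend the primary case from ideals to $P$-primary submodules of $R^k$, then combine the resulting Noetherian multipliers via primary decomposition.

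\textbf{Stage 1 (primary submodules).} Fix a $P_i$-primary submodule $M_i \subset R^k$ of arithmetic length $m_i$. I work in the localization $(R_{P_i})^k/(M_i)_{P_i}$, which by definition has finite length $m_i$ over the local ring $R_{P_i}$. I choose a composition series whose successive quotients are all isomorphic to the residue field $R_{P_i}/P_i R_{P_i}$, and lift a generator of each quotient to a vector $\bar e_{ij}\in R^k$. For each $\bar e_{ij}$ I build a vector-valued Noetherian multiplier $B_{ij}(x,z)\in\CC[x,z]^k$ by a local-duality computation (Macaulay's inverse systems) at $P_i$; this is the module analogue of the construction used in \cite{AHS, CHS} for primary ideals. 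The verification that, for any measure $\mu_{ij}$ on $V_i=V(P_i)$, the integral $\int_{V_i} B_{ij}(x,z)\exp(x\cdot z)\, d\mu_{ij}(x)$ satisfies every PDE in $M_i$ reduces to checking that $\sum_\ell m^{(\ell)}(x)[B_{ij}(x,z)]_\ell$ vanishes on $V_i$ for every $m=(m^{(1)},\ldots,m^{(k)})\in M_i$, which is the defining property of the multipliers.

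\textbf{Stage 2 (general submodules).} Take a minimal primary decomposition $M=M_1\cap\cdots\cap M_s$; the $\sum m_i=\text{amult}(M)$ multipliers produced by Stage 1 are then the candidates. Since $M\subseteq M_i$, every solution of the PDE for $M_i$ is a solution of the PDE for $M$, so the right-hand side of (\ref{eq:anysolution}) lies in the solution space for any choice of measures. The substantive direction is the converse: every solution of the system given by $M$ admits such an integral representation. My plan here is to dualize the canonical exact sequence $0\to R^k/M\to\bigoplus_{i=1}^s R^k/M_i$ and apply the Fundamental Principle of Ehrenpreis-Palamodov in an appropriate functional-analytic setting (smooth or Schwartz-type functions), in which the solution functor is exact on finitely generated $R$-modules. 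This yields a decomposition of the solution space of $M$ as a sum of the solution spaces of the $M_i$, and combining with Stage 1 finishes the proof.

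The hard part is the exactness of the solution functor in Stage 2. In the scalar Ehrenpreis-Palamodov theorem this rests on Palamodov's Fourier-Laplace transform (identifying solutions with entire functions of exponential type) together with H\"ormander-style bounds for division by polynomials. For the module case one must replace division by a single polynomial with division by the matrix presenting $R^k/M$, and check that the quantitative estimates survive this passage so that the bookkeeping of arithmetic lengths at each associated prime exactly matches the count of Noetherian multipliers. Once these estimates are in place, both the existence of the $B_{ij}$ and the surjectivity of the integral representation in (\ref{eq:anysolution}) follow by the same duality argument as in the scalar case.
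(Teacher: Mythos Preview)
The paper does not give a self-contained argument here: its proof simply cites \cite[Theorem~2.2]{AHS} for the statement and \cite[Theorem~4.6(i)]{CS} for the precise count of multipliers. Your two-stage outline---extend Theorem~\ref{thm:EP} to primary submodules, then assemble via primary decomposition, using injectivity of the function module (the Fundamental Principle) for the converse direction---is indeed the skeleton of the proof in those references, and your identification of the exactness of the solution functor as the analytic crux is correct.

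There is, however, a genuine gap in your multiplier count. In Stage~1 you assert that $(R_{P_i})^k/(M_i)_{P_i}$ ``by definition has finite length $m_i$''. This conflates two different numbers. The arithmetic length $m_i$ is an invariant of $M$, namely the length of the largest finite-length submodule of $(R_{P_i})^k/M_{P_i}$; it is \emph{not} the length of the localized primary component $(R_{P_i})^k/(M_i)_{P_i}$. For a minimal associated prime these agree, but for an embedded prime $P_i$ the primary component $M_i$ is not unique and its localized length is in general strictly larger than $m_i$. A minimal example is $M=(x_1^2,\,x_1x_2)\subset\CC[x_1,x_2]$: here $\text{amult}(M)=2$ with $m_{(x_1)}=m_{(x_1,x_2)}=1$, yet every $(x_1,x_2)$-primary component of $M$ has localized length at least~$2$. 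Your Stage~1 run on any primary decomposition therefore produces at least three multipliers, one of which (the constant multiplier at the origin) is redundant, being already absorbed by the minimal component along the line $\{x_1=0\}$.

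So your approach does prove that \emph{some} finite collection of Noetherian multipliers yields the representation~(\ref{eq:anysolution}), but not that one can take exactly $\text{amult}(M)$ of them. That sharp count is the content of the differential primary decomposition in \cite{CS}, which bypasses the choice of primary components altogether and attaches directly to each $P_i$ a set of exactly $m_i$ operators whose residues are linearly independent over the function field of $V_i$. This is precisely the refinement the paper invokes, and it is what is missing from your Stage~1.
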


\begin{proof}
This statement appears in \cite[Theorem 2.2]{AHS}.
Differential primary decomposition 
\cite[Theorem 4.6 (i)]{CS} shows that
the number of inner summands equals the
arithmetic length $m_i$.
\end{proof}

As before, we can pass from Noetherian multipliers
$B_{ij}(x,z)$ to Noetherian operators $B_{ij}(x,\partial_x)$
and obtain a differential primary decomposition of $M$;
see \cite{CS} and \cite[\S 4]{AHS}. 
We write $\bullet$ for the application of a vector of differential
operators to a vector of functions. This is done coordinatewise and 
followed by summing the coordinates.  The result is a function.

\begin{corollary} 
The Noetherian operators determine membership in the module $M$.
Namely, a vector $m \in R^k$ lies in $M$
if and only if $\,B_{ij}(x,\partial_x) \bullet m(x)\,$ vanishes on $\,V_i\,$
for all~$i,j$.
\end{corollary}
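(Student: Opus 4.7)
The plan is to reduce the statement to the primary case via the decomposition $M = M_1 \cap \cdots \cap M_s$, and then to exploit the Fourier-type duality between $M_i$ and its solution space encapsulated by Theorem~\ref{thm:EP2}. Since $m \in M$ if and only if $m \in M_i$ for every $i$, and since the Noetherian multipliers $\{B_{ij}\}_{j=1,\ldots,m_i}$ were indexed so as to be associated to the $P_i$-primary component, it suffices to prove the primary claim: for each fixed $i$, one has $m \in M_i$ precisely when $B_{ij}(x,\partial_x) \bullet m(x)$ vanishes on $V_i$ for every $j \in \{1,\ldots,m_i\}$. This is the vector-valued generalization of the proposition for primary ideals stated just before this corollary, whose ideal case appears as \cite[Proposition 4.8]{AHS}.

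To handle the primary case, I would introduce the bilinear pairing
\[ \langle m, \phi \rangle \,\,:=\,\, \sum_{l=1}^k \bigl(m_l(\partial_z)\, \phi_l(z)\bigr)\big|_{z=0}, \]
between polynomial vectors $m = (m_1,\ldots,m_k) \in R^k$ and smooth solutions $\phi = (\phi_1,\ldots,\phi_k)$ of the PDE given by $M_i$. A standard Fourier-Weyl argument (see \cite{CS}) identifies $M_i$ with the annihilator of the solution space $\mathrm{Sol}(M_i)$ under this pairing. By Theorem~\ref{thm:EP2}, every $\phi \in \mathrm{Sol}(M_i)$ admits the integral form $\phi(z) = \sum_j \int_{V_i} B_{ij}(x,z)\exp(x \cdot z)\,d\mu_{ij}(x)$. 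Because each $B_{ij}(x,z)$ is written with the $x$-variables to the left of the $z$-variables, the elementary identity $m_l(\partial_z)\bigl[B_{ij,l}(x,z)\exp(x\cdot z)\bigr]\big|_{z=0} = B_{ij,l}(x,\partial_x)\, m_l(x)$ transforms the pairing into
\[ \langle m, \phi \rangle \,\,=\,\, \sum_{j=1}^{m_i} \int_{V_i} \bigl(B_{ij}(x,\partial_x) \bullet m(x)\bigr)\, d\mu_{ij}(x). \]
Since the measures $\mu_{ij}$ range freely over $V_i$, this integral vanishes for every solution $\phi$ if and only if each scalar polynomial $B_{ij}(x,\partial_x) \bullet m(x)$ vanishes identically on $V_i$.

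The main obstacle is the \emph{completeness} direction of the duality: showing that the annihilator of $\mathrm{Sol}(M_i)$ is exactly $M_i$, and not some strictly larger $P_i$-primary submodule. This is precisely what the minimality clause of the differential primary decomposition \cite[Theorem 4.6]{CS} secures: the count $m_i$ equals the arithmetic length of $M$ along $P_i$, which is the smallest number of Noetherian multipliers that can separate $M_i$ from every strictly larger $P_i$-primary submodule of $R^k$. Once this separating property is in hand, intersecting the pointwise-vanishing conditions across $i = 1,\ldots,s$ delivers the corollary.
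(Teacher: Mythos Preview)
Your reduction to the primary case is where the argument breaks. You assert that it suffices to prove, for each fixed $i$, that $m \in M_i$ precisely when $B_{ij}(x,\partial_x)\bullet m$ vanishes on $V_i$ for $j=1,\ldots,m_i$. That equivalence is false in general when $P_i$ is an embedded prime. The $m_i$ operators attached to $P_i$ in Theorem~\ref{thm:EP2} come from a \emph{differential primary decomposition} of $M$ in the sense of \cite{CS}; they encode only the excess contribution of $P_i$ beyond the components already seen, not membership in the primary component $M_i$ itself. The arithmetic length $m_i$ is typically strictly smaller than the number of Noetherian operators needed to cut out $M_i$ on its own. Concretely, in the paper's running example $I = Q_1 \cap Q_2$ with $Q_2 = \langle x_1^2, x_2^2, x_3\rangle$, the two operators for the embedded prime $P_2=\langle x_1,x_2,x_3\rangle$ are $\partial_{x_1}\partial_{x_2}$ and $\partial_{x_1}$; the polynomial $f=x_2$ is annihilated at the origin by both, yet $x_2 \notin Q_2$. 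So your ``primary claim'' fails, and with it the whole reduction.

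The duality idea you sketch is sound, but it must be run on $M$ directly rather than on each $M_i$. Pairing $m$ against the full solution space ${\rm Sol}(M)$ via your bilinear form, and plugging in the integral representation (\ref{eq:anysolution}) with the $\mu_{ij}$ ranging independently, yields exactly the simultaneous vanishing of all $B_{ij}(x,\partial_x)\bullet m$ on the $V_i$. The nontrivial input---that the annihilator of ${\rm Sol}(M)$ is $M$ and not larger---is then supplied by the differential primary decomposition theorem \cite[Theorem~4.6]{CS}, which is also how the paper justifies the corollary (via the sentence preceding it and \cite[\S 4]{AHS}). In short: drop the intermediate passage through the $M_i$ and invoke \cite{CS} for $M$ globally.
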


The package \texttt{NoetherianOperators} \cite{Chen}
in the software \texttt{Macaulay2} \cite{MAC2} is a convenient tool
for solving the  PDE given by a 
submodule \texttt{M} of $R^k$. 
Typing \texttt{amult(M)} gives the arithmetic multiplicity of \texttt{M}.
The command \texttt{solvePDE(M}) lists all associated primes
$P_i$ along with their Noetherian multipliers $B_{ij}(x,z)$.
These features are described in~\cite[\S 5]{AHS}.

What is intended with the command \texttt{solvePDE}
vastly generalizes the problem of solving systems of
polynomial equations, which is central to nonlinear algebra.
That point is argued in \cite[Chapter 3]{MS}, which culminates
with writing polynomials as PDE.
First steps towards a  numerical version of \texttt{solvePDE} 
are discussed in \cite[\S 7.5]{AHS} and \cite{Chen}.
It is instructive to revisit \cite[Theorem 3.27]{MS}
through the lens of  Theorem \ref{thm:EP2}.
The solution space of an ideal $I$ is finite-dimensional
if and only if each  $V_i$ is a point.
If,  furthermore, 
$s=1$ and $ V_1 = \{0\}$, then
the Noetherian multipliers
$B_1(z),\ldots,B_{m_1}(z)$ form a basis for the solution space of $I$.

If we pass from ideals to modules then even the case
$s=1, V_1 = \CC^n$ is quite rich and interesting,
especially in connection with the theory of wave cones \cite{ADHR}.
We close with a nontrivial example which shows
what wave solutions are and how they can be constructed.

\begin{example}[$n=4,k=7$]
Let $R = \CC[x]$ and let $M \subset R^7$ be the module generated by
$(x_1,x_2,x_3,x_4,0,0,0)$,
$\,(0,x_1,x_2,x_3,x_4,0,0)$,
$\,(0,0,x_1,x_2,x_3,x_4,0)$ and
$\,(0,0,0,x_1,x_2,x_3,x_4)$.
This module is  primary with $V_1 = \CC^4$
and $\text{amult}(M) = 3$.
It represents a first-order PDE for unknown functions
$\phi : \RR^4 \rightarrow \RR^7$.
To explore solutions of $M$, we apply the
\texttt{Macaulay2} command \texttt{solvePDE}.
The code outputs
three Noetherian multipliers, namely the rows of 
$$ \!\!\!
\begin{bmatrix}
 x_2^4-3 x_1 x_2^2 x_3+x_1^2 x_3^2+2 x_1^2 x_2 x_4 &  \,\,\,\,
   2 x_1^2 x_2 x_3{-}x_1 x_2^3{-}x_1^3 x_4 \quad & \quad   x_1^2 x_2^2{-}x_1^3 x_3 \quad & \quad  \! -x_1^3 x_2 \quad & \,\,\,\,  x_1^4 \quad & \,\,\,\,  0 \quad & \,\,\,\,   0 
\\     
 x_2^3 x_3{-}2 x_1 x_2 x_3^2{-}x_1 x_2^2 x_4 {+} 2 x_1^2 x_3 x_4 \quad & \,\,\,
x_1^2 x_3^2 {-}x_1 x_2^2 x_3{+}x_1^2 x_2 x_4 \quad & \,\,\, 
      x_1^2 x_2 x_3{-}x_1^3 x_4 \quad & \quad   \! -x_1^3 x_3 \quad & \,\,\,\,   0 \quad & 
      \,\,\,\,   x_1^4 \quad & \,\,\,\,   0
\\      x_2^3 x_4-2 x_1 x_2 x_3 x_4+x_1^2 x_4^2 \quad & \quad  
     -x_1 x_2^2 x_4+x_1^2 x_3 x_4 \quad & \quad   x_1^2 x_2 x_4 \quad & \quad\!   -x_1^3 x_4 \quad & \,\,\,   0 \quad & \,\,\,\,   0 \quad & \,\,\,\,  x_1^4
\end{bmatrix}.
$$
These rows are syzygies of $M$.
They span all syzygies as a vector space over the function field $\RR(x)$.
Solutions $\phi$ to the PDE can be constructed from any syzygy
by applying that differential operator to 
any function $f(z_1,z_2,z_3,z_4)$. For instance, writing
subscripts for differentiation, the first row of the matrix above gives
the following solution to our PDE $M$:
$$ \phi = (
f_{2222} - 3 f_{1223} + f_{1133} + 2 f_{1124},\,
2 f_{1123} - f_{1222} - f_{1114}, \,
f_{1122}-f_{1113} ,\, -f_{1112} ,\, f_{1111}, \,0, \, 0 \,) .$$

Next, we show how nonlinear algebra makes waves.
Consider the Hankel~matrix 
$$ H(u) \,\,  = \,\,   \begin{bmatrix}
     u_1 \quad & \quad u_2 \quad & \quad u_3 \quad & \quad u_4 \\
     u_2 \quad & \quad u_3 \quad & \quad u_4 \quad & \quad u_5 \\
     u_3 \quad & \quad u_4 \quad & \quad u_5 \quad & \quad u_6 \\
     u_4 \quad & \quad u_5 \quad & \quad u_6 \quad & \quad u_7 \end{bmatrix}. 
     $$ 
     We identify
the four entries of $x \cdot H(u) $ with the generators of $M$.
The wave cones of \cite{ADHR} are the determinantal varieties
$\{ u \in \PP^6 : \text{rank}(H(u)) \leq r \}$.
For $r = 1$, this is the rational normal curve in $\PP^6$.
For $r=2$, it is the secant variety to the curve, of dimension $3$.
For $r=3$, it is the variety of secant planes.
The latter is the quartic hypersurface 
$\{ u \in \PP^6: \text{det}(H(u)) = 0\}$.
The span of our three Noetherian multipliers furnishes a parametrization of
that hypersurface. 

Any $u \in \PP^6$ with $H(u)$ of low rank yields
 wave solutions to $M$. For an illustration,~let
$$ u \, = \, (1,2,4,8,16,32,64) .$$
Here $H(u)$ has rank $1$. Its kernel is spanned by
$2 e_1-e_2, 2e_2-e_3, 2 e_3- e_4$.
For any scalar function $\psi$ in three variables, 
we obtain a function that satisfies the PDE given by $M$, namely
$$ \phi(z) \,\,=\,\, \psi(2 z_1 - z_2, 2 z_2 - z_3, 2 z_3 - z_4)   \cdot u. $$
This vector is an example of a wave solution.
If we take $\psi$ to be the Dirac distribution 
at the origin in $\RR^3$ then $\phi$ is a distributional solution that
is supported on a line in $\RR^4$.
Characterizing such low-dimensional supports of solutions
is the objective of the article~\cite{ADHR}.
\end{example}

\begin{ack}
Many thanks to Simon Telen for the computation in Example \ref{ex:simon}.
Helpful comments on draft versions of this paper were provided by
Yulia Alexandr, Claudia Fevola, Marc H\"ark\"onen, Yelena Mandelshtam,  Chiara Meroni and Charles Wang.
\end{ack}



\begin{thebibliography}{99}

\bibitem{BAKFST}
D.~Agostini et al.: Likelihood degenerations,
\href{http://arxiv.org/abs/2107.10518}{arXiv:2107.10518}.

\bibitem{AHS}
R.~Ait El Manssour, M.~H\"ark\"onen and B.~Sturmfels:
Linear PDE with constant coefficients,
\href{http://arxiv.org/abs/2104.10146}{arXiv:2104.10146}.
 
\bibitem{AGKMS}
C.~Am\'endola et al.:
The maximum likelihood degree of linear spaces of symmetric matrices,
\emph{Le Matematiche} (2022), \href{http://arxiv.org/abs/2012.00198}{arXiv:2012.00198}.
  
\bibitem{ADHR}
A.~Arroyo-Rabasa, 
G.~De Philippis,
J.~Hirsch and F.~Rindler:
Dimensional estimates and rectifiability for measures satisfying linear PDE constraints,
\emph{Geometric and Functional Analysis} \textbf{29} (2019) 639--658. 


\bibitem{ARS}
P.~Aubry, F.~Rouillier and M.~Safey El Din:
Real solving for positive dimensional systems,
\emph{J. Symbolic Computation} \textbf{34} (2002) 543--560.

\bibitem{Bertini}
D.~Bates, J.~Hauenstein, A.~Sommese and C.~Wampler:
\emph{Numerically solving polynomial systems with Bertini},
Software, Environments, and Tools, 25, SIAM, Philadelphia, 2013

\bibitem{BPT} G.~Blekherman, P.~Parrilo and R.~Thomas, \emph{
Semidefinite optimization and convex algebraic geometry}, MOS-SIAM Ser. Optim., 13, SIAM, Philadelphia, 2013.

\bibitem{BCEMR} T.~Boege et al.:
 Reciprocal maximum likelihood degrees of Brownian motion tree models,
\emph{Le Matematiche} (2022),
  \href{http://arxiv.org/abs/2009.11849}{arXiv:2009.11849}.

\bibitem{BRT} P.~Breiding, K.~Rose and S.~Timme:
Certifying zeros of polynomial systems using interval arithmetic,
2020, \href{http://arxiv.org/abs/2011.05000}{arXiv:2011.05000}.

\bibitem{HCjl}
P.~Breiding and S.~Timme:
HomotopyContinuation.jl: A Package for Homotopy Continuation in Julia,
\emph{Math.~Software -- ICMS 2018}, 458--465, Springer, 2018.


\bibitem{Kohn}  J.~Bruna, K.~Kohn and M.~Trager:
Pure and spurious critical points: a geometric study of linear networks,
\emph{Internat.~Conf.~on Learning Representations} (2020).

\bibitem{CEGM}
F.~Cachazo, N.~Early, A.~Guevara and S.~Mizera:
Scattering equations: from projective spaces to tropical Grassmannians,
\emph{J.~High Energy Phys.} (2019), no.~6, 039.

\bibitem{CUZ} F.~Cachazo, B.~Umbert and Y.~Zhang: Singular solutions in soft limits,
\emph{J.~High Energy Phys.} (2020), no.~5, 148.

\bibitem{CHKS} F. Catanese, S. Ho\c{s}ten, A. Khetan and B.~Sturmfels:
 The maximum likelihood degree, 
\emph{American Journal of Mathematics} \textbf{128}f (2006) 671--697. 

\bibitem{wasserstein} T.~\c{C}elik et al.:
Wasserstein distance to independence models, \emph{J.~Symbolic Computation}
\textbf{104} (2021) 855--873.

\bibitem{Chen}
J.~Chen et al.: Noetherian operators in Macaulay2,
\href{http://arxiv.org/abs/2101.01002}{arXiv:2101.01002}.

\bibitem{CHS} Y.~Cid-Ruiz, R.~Homs and B.~Sturmfels:
Primary ideals and their differential equations,
\emph{Found.  Comput. Math.} (2022),
\href{http://arxiv.org/abs/2001.04700}{arXiv:2001.04700}.

\bibitem{CS} Y.~Cid-Ruiz and B.~Sturmfels:
Primary decomposition with differential operators,
\href{http://arxiv.org/abs/2101.03643}{arXiv:2101.03643}.

 
\bibitem{DHOST} J.~Draisma et al.:
The Euclidean distance degree of an algebraic variety,
\emph{Found.  Comput. Math.} \textbf{16} (2016) 99--149. 

\bibitem{DMS} E.~Duarte, O.~Marigliano and B.~Sturmfels:
 Discrete statistical models with rational maximum likelihood estimator,
 \emph{Bernoulli} \textbf{27} (2021) 135--154.
 
\bibitem{EFSS} C.~Eur, T.~Fife, J.~Samper and T.~Seynnaeve:
Reciprocal maximum likelihood degrees of diagonal linear concentration models,
\emph{Le Matematiche} (2022), \href{http://arxiv.org/abs/2011.14182}{arXiv:2011.14182}.

\bibitem{GKZ} I.~Gel'fand, M.~Kapranov and A.~Zelevinsky: \emph{Discriminants,
resultants and multidimensional determinants}, Birkh\"auser, Boston, 1994.

\bibitem{MAC2} D.~Grayson  and M.~Stillman:  Macaulay2, a software system
for research in algebraic geometry, available at
\href{http://www.math.uiuc.edu/Macaulay2/}{http://www.math.uiuc.edu/Macaulay2/}.

\bibitem{Gro} W.~Gr\"obner: On the Macaulay inverse system and its importance for the theory
of linear differential equations with constant coefficients,
\emph{ACM Commun.~Computer Algebra} \textbf{44} (2010) 20--23.
 \emph{[Abh.~Math.~Sem. Univ.~Hamburg 
\textbf{12} (1937) 127--132]}.

\bibitem{HKS} S. Ho\c{s}ten, A. Khetan and B.~Sturmfels:
Solving the likelihood equations, \emph{Found. Comput. Math.}
\textbf{5} (2005) 389--407. 

\bibitem{Huh13} J.~Huh: The maximum likelihood degree of a very affine variety,
\emph{Compositio Math.} \textbf{149} (2013) 1245--1266.

\bibitem{Huh14} J.~Huh: Varieties with maximum likelihood degree one,
\emph{Journal of Algebraic Statistics} \textbf{5} (2014) 1--17. 
 
\bibitem{HS} J.~Huh and B.~Sturmfels: Likelihood geometry, 
\emph{Combinatorial Algebraic Geometry}.
Lecture Notes in Mathematics \textbf{2108}, Springer Verlag,  63--117, 2014.

\bibitem{KKS} K.~Kubjas, O.~Kuznetsova and L.~Sodomaco:
Algebraic degree of optimization over a variety with an application to p-norm distance degree,
 \href{http://arxiv.org/abs/2105.07785}{arXiv:2105.07785}.
 
\bibitem{MMMSV}
L.~Manivel et al.:
Complete quadrics: Schubert calculus for Gaussian models and semidefinite
programming, 2020, \href{http://arxiv.org/abs/2011.08791}{arXiv:2011.08791}

\bibitem{MMW}  M.~Micha{\l}ek, L.~Monin and J.~Wi\'sniewski:
  Maximum likelihood degree, complete quadrics, and $\CC^*$-action,
  \emph{SIAM J.~Appl.~Algebra Geom.} \textbf{5} (2021) 60--85.

\bibitem{MS} M.~Micha{\l}ek and B.~Sturmfels:
\emph{Invitation to nonlinear algebra},
Graduate Studies in Mathematics~211, American Mathematical~Society,
 Providence, 2021.

\bibitem{OPS} G.~Ottaviani, P-J.~Spaenlehauer and B.~Sturmfels:
Exact solutions in structured low-rank approximation,
\emph{SIAM J.~Matrix Analysis Appl.} \textbf{35} (2014) 1521--1542.

\bibitem{ST} B.~Sturmfels and S.~Telen:
Likelihood equations and scattering amplitudes,
\emph{Algebraic Statistics} (2022),
\href{http://arxiv.org/abs/2012.05041}{arXiv:2012.05041}.

\bibitem{STZ} B.~Sturmfels, S.~Timme and P.~Zwiernik:
Estimating linear covariance models with numerical nonlinear algebra,
\emph{Algebraic Statistics} \textbf{1} (2020) 31--52.

\bibitem{Sul} S.~Sullivant:
\emph{Algebraic statistics}, Graduate Studies in Mathematics~194, American Mathematical Society, Providence, 2018.

\end{thebibliography}
\end{document}